\documentclass[11pt,a4paper]{article}

\usepackage{amsmath}
\usepackage{amssymb,latexsym}
\usepackage{mathabx}
\usepackage{amsthm}
\usepackage[shortlabels]{enumitem}
\usepackage{array}
\usepackage[hang,flushmargin,bottom]{footmisc}
\usepackage[margin=0.35in,format=hang,font=small,labelfont=bf]{caption}
\usepackage[normalem]{ulem}
\usepackage{needspace}
\usepackage{graphicx,tikz}
\usepackage[colorlinks=true,urlcolor=blue,linkcolor=blue,citecolor=blue]{hyperref}
\usepackage{palatino}
\usepackage{eulervm}
\usepackage{booktabs}

\usepackage{ifthen}
\usetikzlibrary{calc}
\usetikzlibrary{decorations.pathreplacing,decorations.markings}

\newcommand{\plotptradius}{4pt}
\newcommand{\setplotptradius}[1]{\renewcommand{\plotptradius}{#1}}

\tikzset{permpt/.style={circle, draw, fill=black, inner sep=0pt, minimum width=\plotptradius}}
\tikzset{hollow/.style={circle, draw, fill=none, inner sep=0pt, minimum width=\plotptradius}}
\tikzset{empty/.style={draw=none, fill=none}}

\newcommand\absdot[2]{
	\node[permpt] at #1 {};
}

\newcommand{\plotperm}[2][black]{ 
	\foreach \j [count=\i] in {#2} {
        \ifnum0=\j {} \else {
 		\node[permpt,fill=#1,draw=#1] (\j) at (\i,\j) {};
	} \fi
	}
}
\newcommand{\plotpermbox}[4]{
	\draw [darkgray, very thick, line cap=round, fill=white]
		({#1-0.5}, {#2-0.5}) rectangle ({#3+0.5}, {#4+0.5});
}

\newcommand{\plotpermborder}[1]{
	\foreach \i [count=\nn] in {#1} {\global\let\n\nn};    
	\plotpermbox{1}{1}{\n}{\n};
	\plotperm{#1};
}
\newcommand{\plotpermbordergrid}[1]{
	\foreach \i [count=\nn] in {#1} {\global\let\n\nn};    
	\plotpermbox{1}{1}{\n}{\n};
	\draw[step=1cm,gray!50,very thin] (1,1) grid (\n,\n);
	\plotperm{#1};
}
\newcommand{\plotgrid}[1]{
	\draw[step=1cm,gray!50,very thin] (0.5,0.5) grid (#1+0.5,#1+0.5);
}

\newcommand{\plotpermgrid}[1]{
	\foreach \i [count=\nn] in {#1} {\global\let\n\nn};    
	\plotgrid{\n};
	\plotperm{#1};
}

\newcommand{\plotpinsequence}[1]{
	\absdot{(0,0)}{};
	\edef\n{0}
	\edef\s{0}
	\edef\e{0}
	\edef\w{0}
	\edef\x{0}
	\edef\y{0}
	\foreach \pin [remember=\pin as \oldpin (initially 1), count=\i] in {#1} {
		\ifthenelse{\pin=1 \OR \pin=2}{
			\ifthenelse{\oldpin=3}{
				\xdef\x{\number\numexpr\e-1}
			}{
				\xdef\x{\number\numexpr\w+1}
			}
			\ifnum\i=1 
				\pgfmathparse{\e+1}
 				\xdef\e{\pgfmathresult}
			\fi	
		}{ 
			\ifthenelse{\oldpin=1}{
				\xdef\y{\number\numexpr\n-1}
			}{
				\xdef\y{\number\numexpr\s+1}
			}
			\ifnum\i=1 
				\pgfmathparse{\s-1}
 				\xdef\s{\pgfmathresult}
			\fi	
		}
		\ifnum\pin=1 
			\pgfmathparse{\n+2}
 			\xdef\n{\pgfmathresult}		
			\absdot{(\x,\n)}{};
			\ifnum\i>1
				\draw (\x,\n) -- (\x,\y-0.5);
			\else
				\draw[gray,very thick] (-0.5,-0.5) rectangle (\x+0.5,\n+0.5);
			\fi
		\fi
		\ifnum\pin=2 
			\pgfmathparse{\s-2}
 			\xdef\s{\pgfmathresult}
			\absdot{(\x,\s)}{};
			\ifnum\i>1
				\draw (\x,\s) -- (\x,\y+0.5);
			\else
				\draw[gray,very thick] (-0.5,0.5) rectangle (\x+0.5,\s-0.5);
			\fi
		\fi
		\ifnum\pin=3 
			\pgfmathparse{\e+2}
 			\xdef\e{\pgfmathresult}
			\absdot{(\e,\y)}{};
			\ifnum\i>1
				\draw (\e,\y) -- (\x-0.5,\y);
			\else
				\draw[gray,very thick] (-0.5,+0.5) rectangle (\e+0.5,\y-0.5);
			\fi
		\fi
		\ifnum\pin=4 
			\pgfmathparse{\w-2}
 			\xdef\w{\pgfmathresult}
			\absdot{(\w,\y)}{};
			\ifnum\i>1
				\draw (\w,\y) -- (\x+0.5,\y);
			\else
				\draw[gray,very thick] (0.5,0.5) rectangle (\w-0.5,\y-0.5);

			\fi
		\fi		
	};
}

\tikzset{
  on each segment/.style={
    decorate,
    decoration={
      show path construction,
      moveto code={},
      lineto code={
        \path [#1]
        (\tikzinputsegmentfirst) -- (\tikzinputsegmentlast);
      },
      curveto code={
        \path [#1] (\tikzinputsegmentfirst)
        .. controls
        (\tikzinputsegmentsupporta) and (\tikzinputsegmentsupportb)
        ..
        (\tikzinputsegmentlast);
      },
      closepath code={
        \path [#1]
        (\tikzinputsegmentfirst) -- (\tikzinputsegmentlast);
      },
    },
  },
  mid arrow/.style={postaction={decorate,decoration={
        markings,
        mark=at position .5 with {\arrow[#1]{stealth}}
      }}},
}
\usetikzlibrary{decorations,patterns}

\newtheorem{theorem}{Theorem}[section]
\newtheorem*{theorem*}{Theorem}
\newtheorem{proposition}[theorem]{Proposition}
\newtheorem{corollary}[theorem]{Corollary}

\newtheorem*{conj*}{Conjecture}

\theoremstyle{definition}

\newtheorem{defn*}{Definition}

\newtheorem*{example*}{Example}

\newtheorem*{comment*}{Comment}

\newcommand{\AAA}{\mathcal{A}}
\newcommand{\BBB}{\mathcal{B}}
\newcommand{\CCC}{\mathcal{C}}

\newcommand{\DDD}{\mathcal{D}}

\newcommand{\HHH}{\mathcal{H}}

\newcommand{\RRR}{\mathcal{R}}
\newcommand{\SSS}{\mathcal{S}}

\newcommand{\av}{\mathsf{Av}}
\newcommand{\gr}{\mathrm{gr}}

\newcommand{\disc}{\mathrm{disc}}

\newcommand{\lr}{{\color{red!50!black}\mathsf{R}}}
\newcommand{\lro}{{\color{red!50!black}\mathring{\lr}}}
\newcommand{\lb}{{\color{blue!50!black}\mathsf{B}}}
\newcommand{\lbo}{{\color{blue!50!black}\mathring{\lb}}}

\tikzstyle{vertex}=[circle, draw, fill=black,
                        inner sep=0pt, minimum width=4pt]

\newcommand{\blueArc}[2] {\draw [blue!50!black,very thick] (#1,1.25) arc (0:180:#2);} 
\newcommand{\fillBlueArc}[2] {\fill [gray!40!white] (#1,1.25) arc (0:180:#2); \draw [gray!40!white,line width=6] (#1,1)--(#1-2*#2,1);} 
\newcommand{\bluePt}[1]  {\draw [blue!50!black,very thick] (#1,1.25)--(#1,1.85);}
\newcommand{\redArc}[2]{\draw [red!50!black,very thick] (#1,0.75) arc (180:360:#2);} 
\newcommand{\redPt}[1] {\draw [red!50!black,very thick] (#1,0.75)--(#1,0.15);}
\newcommand{\archavoid}{
\!%
\raisebox{-2pt}{
  \begin{tikzpicture}[scale=0.15,line join=round]
    \setplotptradius{2pt}
    \draw [gray,thin] (0,1)--(5,1);
    \redArc{1}{1 and .8}
    \blueArc{4}{1 and .8}
    \plotperm[blue!50!black]{0,1,0,1}
    \plotperm[red!50!black]{1,0,1,0}
  \end{tikzpicture}
}%
\!
}

\title{Shuffle automata and the growth of 1324-avoiding permutations}

\author{Robert Brignall\\
\small School of Mathematics and Statistics\\
\small The Open University, UK
}

\begin{document}
\maketitle

\begin{abstract}
We show that $\gr(\av(1324))\leq 13.167248$. This is achieved by combining the approach of Bevan, Brignall, Elvey Price and Pantone using the `domino', with ideas from earlier upper bounds by B\'ona using pairs of decorated words with additional restrictions. More specifically, we replace the arbitrary interleavings of Bevan et. al. with interleavings more like those of B\'ona. Our method uses shuffle automata to handle the interleavings, and this method has the potential to provide better upper bounds than the one established here. To estimate how many shuffles are possible, we use two key statistics on dominoes: the number of internal points (that is, points that are neither left-to-right minima not right-to-left maxima), and the number of runs of internal points.
\end{abstract}

%
%
%
%
%
%
%
%
\section{Introduction}

The class $\av(1324)$ is infamously difficult to count. Up to symmetry, there are seven distinct classes $\av(\beta)$ where $\beta$ is a permutation of length 4, and the precise enumeration of the other six was completed in the late 1990s. On the other hand, much about $\av(1324)$ remains unknown, including the asymptotic growth rate. Numerical analysis based on extensive computation by Conway, Guttmann and Zinn-Justin~\cite{conway:1324-revisited} gives an estimate of $\gr(\av(1324))=11.600\pm0.003$, and they suggest an unusual form for its first-order asymptotics involving a `stretched exponential' term. However, a substantial gap remains between this estimate and rigorous bounds.

The best published upper and lower bounds on the growth rate are due to the author with Bevan, Elvey Price and Pantone~\cite{bbep:1324}. In that paper, the `staircase decomposition' of $\av(1324)$ is introduced and used to establish the precise enumeration of an associated class known as the `domino'. We review the salient points of this paper in Sections~\ref{sec:preliminaries} and~\ref{sec:weighted-dominoes}, but for now it suffices to note that the growth rate of this `domino' class is $27/4 = 6.75$. By exploiting knowledge about the domino class, the following bounds are obtained:
\[
10.271 \leq  \gr(\av(1324))\leq 13.5.
\]
A recent preprint by Norton~\cite{norton:a-new-lower-bound} claims a new lower bound of 10.412263, by combining the method of~\cite{bbep:1324} with the \emph{Harris inequality}~\cite{harris:a-lower-bound}.

Our focus here is on upper bounds. The upper-bound argument of~\cite{bbep:1324} encodes a $1324$-avoider by a domino (whose growth rate is $27/4$) together with a binary word recording how the two colours are interleaved in value order (of `growth rate' $2$). It is remarked there that the use of the arbitrary binary word is `rudimentary'; our aim here is to develop a more sophisticated method whilst following the same basic approach.  We use ideas from two earlier record upper bounds due to B\'ona~\cite{bona:a-new-upper-bou:,bona:a-new-record-for:}, in which permutations are encoded by two words over an alphabet of four letters.  Here, we use the domino, plus one word over a four-letter alphabet to record the interleavings by value. This word is essentially a shuffle of two words (over two disjoint two-letter alphabets) that satisfies certain additional restrictions. By counting these shuffles using `shuffle automata' and combining them with a refinement of the domino enumeration to track two additional statistics, we develop a framework in which automata can be used to find improved bounds on $\gr(\av(1324))$. The bound we present here is $\gr(\av(1324))\leq 13.167248$.

Note that there remains open a conjecture due to Claesson, Jel\'inek and Steingr\'imsson~\cite{claesson:upper-bounds-fo:} regarding
the number of 1324-avoiders of each length that have a fixed number of inversions. If true, it would imply an upper bound of $e^{\pi\sqrt{2/3}}\approx 13.002$.  Linusson and Verkama~\cite{linusson:enumerating-1324} made some progress towards this conjecture, but the general case (and the resulting upper bound on $\gr(\av(1324))$) remain out of reach for the time being. See also Claesson, Linusson, Ulfarsson and Verkama~\cite{claesson:inversion-monotonicity}.

\paragraph{Declaration of AI use}
The work in this paper was conducted by the author in conjunction with
ChatGPT 5.6 Sol and ChatGPT 6.0 Astra. The initial prompt asked a single
instance of ChatGPT to investigate whether the methods of~\cite{bbep:1324}
could be combined with those of B\'ona~\cite{bona:a-new-record-for:} to obtain
improvements on the upper bound. Its response indicated that this was
possible and claimed a new upper bound; importantly, this response identified the two key statistics and much of the reasoning required.
Subsequent prompts were used to improve on its initial result, to refine
the exposition, to prepare the formal verification code in Lean and to make deeper explorations beyond the content of the paper (see the concluding remarks). 
The mathematical content of this paper has been fully digested\footnote{See Tao~\cite{tao2026mathematicsageai} to understand the workflow used to prepare this paper.} by the author, who accepts full responsibility for
the results herein.

\paragraph{Organisation of this paper}
Section~\ref{sec:preliminaries} recalls the necessary terminology concerning permutation classes, the staircase decomposition of $\av(1324)$ and the domino, and then introduces the
decorated red--blue words used throughout the paper. It concludes with a brief overview of the method deployed in the rest of the paper. Section~\ref{sec:weighted-dominoes} uses essentially the same methods as~\cite{bbep:1324} to derive a weighted generating function
for dominoes that records the two statistics required (namely, internal points and runs of internal points).
Section~\ref{sec:shuffle-automata} develops the weighted shuffle-automaton machinery used to bound the number of admissible shuffles, and Section~\ref{sec:lookahead} builds two `lookahead' shuffle automata: the first of these is a toy example included primarily to aid exposition, while the second is the automaton that gives rise to the advertised upper bound. Section~\ref{sec:conclusion} provides various concluding remarks, including indicative results of various explorations on further refinements that have already been tested.

There are two appendices containing data for the analysis of the two automata introduced in Section~\ref{sec:lookahead}. Additionally, a GitHub repository exists with a Lean formalisation of Proposition~\ref{prop:three-step-certificate}. Note: it is only the arguments to prove Proposition~\ref{prop:three-step-certificate} that have been formalised, as this is the one step which would be neither rewarding nor informative were it to be verified by traditional means. The repository is at \url{https://github.com/rbrignall/1324-lookahead/}.

%
%
%
%
%
%
%
%
%
\section{Preliminaries}
\label{sec:preliminaries}

\subsection{Permutations, classes and the staircase}

A permutation of length $n$ is a bijection $\pi:[n]\to[n]$, which we
usually write in one-line notation and identify with its \emph{plot}
\[
 \{(i,\pi(i)):i\in[n]\}.
\]
We say that a permutation $\pi$ is \emph{contained} in $\sigma$ if some subsequence of $\sigma$ is order isomorphic to $\pi$; such a subsequence is an \emph{occurrence} of $\pi$.
If $\sigma$ does not contain $\pi$, then $\sigma$ \emph{avoids} $\pi$. 
We write $\av(\pi)$ to denote the set of permutations avoiding $\pi$, and $\av_n(\pi)$ for the permutations in $\av(\pi)$ of length $n$.

The sets $\av(\pi)$ for different values of $\pi$ are \emph{permutation classes}, which more generally are collections of permutations that are downwards closed under containment. If $\CCC$ is a permutation class, let $\CCC_n$ denote
its members of length $n$, and define its exponential growth rate by
\[
 \gr(\CCC)=\limsup_{n\to\infty}\sqrt[n]{|\CCC_n|}.
\]
For every proper permutation class, this limsup is finite by Marcus and Tardos~\cite{marcus:excluded-permut:}. For a principal class $\av(\pi)$, Arratia~\cite{arratia:on-the-stanley-:} used Fekete's lemma to show that limsup can be replace by lim. 

Our work considers a number of different classes of combinatorial structures. In each case, we will as required extend the term \emph{growth rate} to refer to $\limsup_{n\to\infty}\sqrt[n]{a_n}$, where $a_n$ denotes the number of these objects of `size' $n$.

The class of interest in this paper is, of course, $\av(1324)$, but there are two other rather more tractable classes that feature: $\av(132)$ and $\av(213)$. Bevan, Brignall, Elvey Price and Pantone~\cite{bbep:1324} showed that every permutation in $\av(1324)$ admits a decomposition into the descending $(\av(213),\av(132))$ \emph{staircase}, which is
\[
 \begin{matrix}
  \av(213) & \\
  \av(132) & \av(213)    \\
            & \av(132)   & \av(213)    \\
            &            & \ddots      & \ddots .
 \end{matrix}
\]
In other words, the plot of every permutation $\pi\in\av(1324)$ can be partitioned via a sequence of vertical and horizontal dividing lines so that all the points of $\pi$ belong to the cells on the diagonal and sub-diagonal, and so that the permutations formed by the points of $\pi$ in each cell belong to the class of the corresponding entry in the above diagram. Throughout, we use the \emph{greedy} staircase decomposition of~\cite{bbep:1324}, which sequentially takes each cell to contain as many points as possible, from top left to bottom right.

Given the plot of a permutation $\pi\in\av(1324)$ equipped with a greedy staircase decomposition, it is helpful to use colours to distinguish between the points in the two types of cell. Specifically, colour the points in $\av(213)$ cells blue, and those in $\av(132)$ cells red.   Note that all the blue points collectively avoid $213$, while the red points collectively avoid $132$. Furthermore, no red point can lie both above and to the right of a blue
point: that is, if a red point lies above a blue point, then it lies to its left.

%
%
%
%
%
\subsection{Dominoes and decorated red--blue words}

A \emph{domino} is a permutation divided by a single horizontal line into two cells, such that the upper cell avoids 213 and the lower 132, and whose underlying permutation avoids 1324. Either cell may be empty, and it is only the resulting partition of the points between the two cells that is retained (rather than the precise position of the dividing line). In other words, the dominoes are the permutations of $\av(1324)$ that can be drawn into the top two cells of the staircase decomposition, together with the line that distinguishes the two cells. %
Let $\DDD$ denote the set of dominoes, and $\DDD_n$ those with $n$ points.

Given the greedy staircase decomposition of a permutation $\pi\in\av(1324)$, the upper-bound
construction of~\cite{bbep:1324} produces a domino $\delta(\pi)$ by
placing all blue points above all red points, while preserving horizontal
order and the relative vertical order within each colour. It also records
a binary word $\chi(\pi)$ which, reading the points of $\pi$ from bottom-to-top, records whether each point is blue or red. See Figure~\ref{figBBEPUpperBound}.

\begin{figure}[ht]
  \centering
    \setplotptradius{2pt}
    \begin{tikzpicture}[scale=0.2]
      \fill[blue!5] (3.5,26.5) rectangle (11.5,23.5);
      \fill[red!10] (3.5,23.5) rectangle (11.5,13.5);
      \fill[blue!5] (11.5,23.5) rectangle (20.5,13.5);
      \fill[red!10] (11.5,13.5) rectangle (20.5,5.5);
      \fill[blue!5] (20.5,13.5) rectangle (27.5,5.5);
      \fill[red!10] (20.5,5.5) rectangle (27.5,2.5);
      \plotperm[blue!50!black,thick]{ 0,  0,  0, 24,  0,  0, 26,  0, 25,  0,  0, 20, 14,  0,  0, 0, 17,  0, 16, 15, 9, 0, 11, 0, 8, 7, 0}
      \plotperm[ red!50!black,thick]{ 0,  0,  0,  0, 23, 19,  0, 18,  0, 21, 22,  0,  0, 10, 12, 6,  0, 13,  0,  0, 0, 5,  0, 3, 0, 0, 4}
      \draw[thin] (3.5,26.5) -- (11.5,26.5);
      \draw[thin] (3.5,13.5) -- (3.5,26.5);
      \draw[thin] (3.5,23.5) -- (20.5,23.5);
      \draw[thin] (11.5,5.5) -- (11.5,26.5);
      \draw[thin] (3.5,13.5) -- (27.5,13.5);
      \draw[thin] (20.5,2.5) -- (20.5,23.5);
      \draw[thin] (11.5,5.5) -- (27.5,5.5);
      \draw[thin] (27.5,2.5) -- (27.5,13.5);
      \draw[thin] (20.5,2.5) -- (27.5,2.5);
      \node at (33,14.5) {$\mapsto$};
    \end{tikzpicture}
    $\quad$
    \begin{tikzpicture}[scale=0.2,>=stealth]
      \fill[blue!5] (0.5,24.5) rectangle (8.5,21.5);
      \fill[blue!5] (8.5,21.5) rectangle (17.5,16.5);
      \fill[blue!5] (17.5,16.5) rectangle (24.5,12.5);
      \fill[red!10] (0.5,12.5) rectangle (8.5,7.5);
      \fill[red!10] (8.5,7.5) rectangle (17.5,3.5);
      \fill[red!10] (17.5,3.5) rectangle (24.5,0.5);
      \draw[thin,gray] (.5,21.5)--(17.5,21.5);
      \draw[thin,gray] (8.5,16.5)--(24.5,16.5);
      \draw[thin,gray] (.5,7.5)--(17.5,7.5);
      \draw[thin,gray] (8.5,3.5)--(24.5,3.5);
      \draw[thin,gray] (8.5,3.5)--(8.5,12.5);
      \draw[thin,gray] (8.5,16.5)--(8.5,24.5);
      \draw[thin,gray] (17.5,0.5)--(17.5,7.5);
      \draw[thin,gray] (17.5,12.5)--(17.5,21.5);
      \draw[thick] (.5,.5) rectangle (24.5,24.5);
      \draw[thick] (.5,12.5)--(24.5,12.5);
      \draw[blue!50!white,thick,->] ( 9,21-0.25)->( 9,18);
      \draw[blue!50!white,thick,->] (14,20-0.25)->(14,15);
      \draw[blue!50!white,thick,->] (16,19-0.25)->(16,14);
      \draw[blue!50!white,thick,->] (17,18-0.25)->(17,13);
      \draw[blue!50!white,thick,->] (10,17-0.25)->(10,12);
      \draw[blue!50!white,thick,->] (20,16-0.25)->(20, 9);
      \draw[blue!50!white,thick,->] (18,15-0.25)->(18, 7);
      \draw[blue!50!white,thick,->] (22,14-0.25)->(22, 6);
      \draw[blue!50!white,thick,->] (23,13-0.25)->(23, 5);
      \draw[red!50,thick,->] ( 2,12)->( 2,21);
      \draw[red!50,thick,->] ( 8,11)->( 8,20);
      \draw[red!50,thick,->] ( 7,10)->( 7,19);
      \draw[red!50,thick,->] ( 3, 9)->( 3,17);
      \draw[red!50,thick,->] ( 5, 8)->( 5,16);
      \draw[red!50,thick,->] (15, 7)->(15,11);
      \draw[red!50,thick,->] (12, 6)->(12,10);
      \draw[red!50,thick,->] (11, 5)->(11, 8);
      \plotperm[blue!50!black,thick]{22,  0, 0, 24, 0, 23,  0,  0, 21, 17, 0, 0, 0, 20, 0, 19, 18, 15, 0, 16, 0, 14, 13, 0}
      \plotperm[ red!50!black,thick]{ 0, 12, 9,  0, 8,  0, 10, 11,  0,  0, 5, 6, 4,  0, 7,  0,  0,  0, 3,  0, 1,  0,  0, 2}
      \tikzset{r/.style={red!50!black}}
      \tikzset{b/.style={blue!50!black}}
      \draw[thick] (29.5,.5) rectangle ++(1,24);
      \foreach \col [count=\n] in {r,r,r,r,b,b,b,r,b,r,r,b,b,b,b,r,r,b,r,r,r,b,b,b}
      	\node[permpt,\col,thick] at (30,\n) {};
    \end{tikzpicture}\par
\caption{The mapping used in~\cite{bbep:1324} to map a greedy-gridded 1324-avoider to a domino and a binary word.}
\label{figBBEPUpperBound}
\end{figure}

The resulting map
\[
 \pi\longmapsto\bigl(\delta(\pi),\chi(\pi)\bigr)
\]
is injective. Since there are $2^n$ binary words of length $n$, and
$\gr(\DDD)=27/4$ by~\cite[Theorem~3.1]{bbep:1324}, this gives
\[
 |\av_n(1324)|\leq 2^n|\DDD_n|
 \qquad\text{and hence}\qquad
 \gr(\av(1324))\leq\frac{27}{2}.
\]
The loss in this estimate is that, for a fixed domino, not every binary
word can arise as the vertical interleaving of a $1324$-avoiding
permutation. Our aim is to exploit some of the restrictions not captured in the above analysis.

Following B\'ona~\cite{bona:a-new-upper-bou:,bona:a-new-record-for:}, we
refine the two-letter alphabet by distinguishing extremal points within
each colour. A red point is assigned type $\lro$ if it is a left-to-right
minimum among the red points, and type $\lr$ otherwise. Similarly, a blue
point is assigned type $\lbo$ if it is a right-to-left maximum among the
blue points, and type $\lb$ otherwise. We call the decorated letters
$\lro$ and $\lbo$ \emph{external}, and the undecorated letters $\lr$ and
$\lb$ \emph{internal}.\footnote{To translate between this notation and
that of B\'ona~\cite{bona:a-new-upper-bou:}, use
$A=\lro$, $B=\lr$, $C=\lb$, $D=\lbo$.}
For convenience, put
\[
 \BBB=\{\lbo,\lb\},
 \qquad
 \RRR=\{\lro,\lr\}.
\]

Associated with $\pi$ are two words over $\RRR\cup\BBB$. The
\emph{position word} $w_{\mathrm p}(\pi)$ records the decorated letters from
left to right, while the \emph{value word} $w_{\mathrm v}(\pi)$ records
them from bottom to top. Since the construction of $\delta(\pi)$
preserves position order and the relative value order within each colour,
it also preserves the left-to-right minima among red points and the
right-to-left maxima among blue points. Hence $w_{\mathrm p}(\pi)$ is
already determined by $\delta(\pi)$.

Let $\Sigma^*$ denote the set of finite words, including the empty word $\epsilon$, over the alphabet $\Sigma$.
For a domino $\delta$, let
\[
 u(\delta)\in\BBB^*, \qquad \ell(\delta)\in\RRR^*
\]
be obtained by reading, respectively, the upper and lower cells from
bottom to top.

\begin{figure}[ht]
{\centering
\setplotptradius{3pt}
  \begin{tikzpicture}[xscale=0.12,yscale=0.24,line join=round]
    \fill[blue!5,draw=black,thick] (.25,.25) rectangle (19.75,9.5);
    \fill[red!10,draw=black,thick] (.25,.25) rectangle (19.75,-10.5);

    \foreach \x/\y in {3/7,7/1,9/3,12/4 }
        \node[permpt,blue!50!black] at (\x,\y) {};
    \foreach \x/\y in {4/9,5/8,11/6,14/5,17/2}
        \node[permpt,blue!50!black,thick,hollow] at (\x,\y) {};

    \foreach \x/\y in {2/2,6/1,15/8,16/5,19/4}
        \node[permpt,red!50!black] at (\x,-\y) {};
    \foreach \x/\y in {1/3,8/6,10/7,13/9,18/10}
        \node[permpt,red!50!black,thick,hollow] at (\x,-\y) {};
  \end{tikzpicture}
\par}
\caption{A domino, in which red left-to-right minima and blue right-to-left
maxima are drawn as hollow circles.}
\label{fig:domino}
\end{figure}

For the domino in Figure~\ref{fig:domino}, for example,
\begin{align*}
 u(\delta) &= \lb\lbo\lb\lb\lbo\lbo\lb\lbo\lbo,\\
 \ell(\delta) &= \lro\lro\lr\lro\lro\lr\lr\lro\lr\lr.
\end{align*}
Then $w_{\mathrm v}(\pi)$ is a \emph{shuffle} of $u(\delta(\pi))$ and $\ell(\delta(\pi))$: that is, $w_{\mathrm v}(\pi)$ is formed by interleaving the two words $u(\delta(\pi))$ and $\ell(\delta(\pi))$, whilst preserving the order within each of these two subwords.
The original binary word
$\chi(\pi)$ is recovered from $w_{\mathrm v}(\pi)$ by forgetting the
decorations.

The following observation, essentially due to
B\'ona~\cite{bona:a-new-upper-bou:}, is the basic restriction on these words.
A \emph{factor} of a word is a contiguous subword.

\begin{proposition}[{B\'ona~\cite[Lemmas~3.2 and~3.3]{bona:a-new-upper-bou:}}]
\label{prop:BR-free}
For every $\pi\in\av(1324)$, neither $w_{\mathrm p}(\pi)$ nor
$w_{\mathrm v}(\pi)$ contains the factor $\lb\lr$.
\end{proposition}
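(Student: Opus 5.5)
The plan is to treat the two words separately. In each case we assume the forbidden factor occurs and exhibit an explicit occurrence of $1324$ in $\pi$, which is a contradiction. Throughout we use three facts from the staircase setup. First, a blue point of type $\lb$ is not a right-to-left maximum among blue points, so there is a blue point $b'$ lying to its right and above it. Second, a red point of type $\lr$ is not a left-to-right minimum among red points, so there is a red point $r'$ lying to its left and below it. Third, no red point lies both above and to the right of a blue point.

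\emph{Position word.} Suppose $w_{\mathrm p}(\pi)$ contains $\lb\lr$. Then there is an internal blue point $b$ at some position $i$ and an internal red point $r$ at position $i+1$. Take $b'$ as above for $b$ and $r'$ as above for $r$.
\begin{itemize}
\item Since $r$ lies to the right of $b$, it cannot lie above $b$, so $r$ lies below $b$.
\item The point $r'$ lies left of position $i+1$ and is not $b$ (it has a different colour), so $r'$ lies at a position less than $i$.
\item Likewise $b'$ lies right of position $i$ and is not $r$, so $b'$ lies at a position greater than $i+1$.
\end{itemize}
Hence the positions satisfy $r'<b<r<b'$. For the values we have $r'<r$ (choice of $r'$), $r<b$ (first bullet) and $b<b'$ (choice of $b'$), so $r'<r<b<b'$. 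Read left to right, the four points $r',b,r,b'$ therefore form the pattern $1324$.

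\emph{Value word.} Suppose $w_{\mathrm v}(\pi)$ contains $\lb\lr$, so an internal blue point $b$ is immediately followed in value by an internal red point $r$. Take $b'$ and $r'$ as before.
\begin{itemize}
\item The point $r'$ is below $r$ and is not $b$, so by value-adjacency it lies below $b$.
\item Similarly $b'$ is above $b$ and is not $r$, so it lies above $r$.
\item Since $r$ is above the blue point $b$, the third fact forces $r$ to lie to the left of $b$.
\end{itemize}
The positions are then ordered $r'<r<b<b'$, and the values are ordered $r'<b<r<b'$. Read left to right, the points $r',r,b,b'$ again form $1324$.

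The only delicate point is verifying the strict interleaving in each case. The auxiliary witnesses $r'$ and $b'$ must not fall between $b$ and $r$, and this is exactly where adjacency (in position or in value) is used. One must also use the colour constraint to fix the relative order of $b$ and $r$ in the other coordinate. Once these inequalities are checked, both cases are immediate.
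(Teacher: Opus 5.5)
Your proof is correct and is essentially the paper's argument. In each case you take a red witness below-left of $r$ and a blue witness above-right of $b$, use adjacency to keep these witnesses outside the pair, and use the staircase constraint to fix the order of $b$ and $r$ in the other coordinate; this gives the occurrences $r'\,b\,r\,b'$ and $r'\,r\,b\,b'$ of $1324$, exactly as in the paper, and you are slightly more explicit than the paper about why the witnesses cannot fall between $b$ and $r$.
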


\begin{proof}
Suppose first that $w_{\mathrm v}(\pi)$ contains a factor $\lb\lr$,
corresponding to consecutive points $b<r$ in value order. Since $r$ is
red and lies above the blue point $b$, the staircase property forces $r$
to lie to the left of $b$. As $r$ is internal, there is a red point $a$
below and to the left of $r$; as $b$ is internal, there is a blue point
$d$ above and to the right of $b$. Since $b$ and $r$ are consecutive in
value order, we have $a<b<r<d$. Thus the four points occur from left to
right as
\[
 a,\ r,\ b,\ d,
\]
and form an occurrence of $1324$.

Now suppose that $w_{\mathrm p}(\pi)$ contains a factor $\lb\lr$,
corresponding to consecutive positions occupied by a blue point $b$
followed by a red point $r$. Since a red point cannot lie northeast of a
blue point, we have $r<b$. Again choose a red point $a$ below and to the
left of $r$, and a blue point $d$ above and to the right of $b$. The
consecutiveness of $b$ and $r$ in position order gives the left-to-right
order
\[
 a,\ b,\ r,\ d,
\]
while $a<r<b<d$. These four points again form a $1324$, a contradiction.
\end{proof}

Given $u\in\BBB^*$ and $\ell\in\RRR^*$, define
\[
 \SSS(u,\ell)
 =
 \left\{
 \text{$\lb\lr$-avoiding shuffles of $u$ and $\ell$}
 \right\},
 \qquad
 s(u,\ell)=|\SSS(u,\ell)|.
\]
The preceding discussion immediately gives the basic inequality on which
the rest of the paper is built.

\begin{proposition}
\label{prop-1324-leq-shuffles}
For every $n$,
\[
 |\av_n(1324)|
 \leq
 \sum_{\delta\in\DDD_n}s(u(\delta),\ell(\delta)).
\]
\end{proposition}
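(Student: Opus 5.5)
The plan is to refine the injection $\pi\mapsto(\delta(\pi),\chi(\pi))$ of~\cite{bbep:1324} by replacing $\chi(\pi)$ with the decorated value word $w_{\mathrm v}(\pi)$. We consider the map
\[
 \Phi:\pi\longmapsto\bigl(\delta(\pi),w_{\mathrm v}(\pi)\bigr),
\]
defined on $\av_n(1324)$ via the greedy staircase decomposition. We will show two things: that $\Phi$ is injective, and that for each $\pi$ the word $w_{\mathrm v}(\pi)$ lies in $\SSS(u(\delta(\pi)),\ell(\delta(\pi)))$. Grouping the permutations $\pi$ according to $\delta(\pi)$ then gives the inequality immediately, since the fibre over a domino $\delta$ injects into $\SSS(u(\delta),\ell(\delta))$.

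For injectivity, observe that $\chi(\pi)$ is obtained from $w_{\mathrm v}(\pi)$ by forgetting decorations. Hence $\Phi(\pi)=\Phi(\pi')$ implies $(\delta(\pi),\chi(\pi))=(\delta(\pi'),\chi(\pi'))$, and therefore $\pi=\pi'$ by the injectivity already established in~\cite{bbep:1324}. Concretely, the domino fixes the horizontal order and the colour of every point together with the vertical order within each colour, and $\chi$ fixes how the two colour classes interleave by value; together these determine the plot of $\pi$.

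For membership, we first check that $w_{\mathrm v}(\pi)$ is a shuffle of $u(\delta(\pi))$ and $\ell(\delta(\pi))$. Restricting $w_{\mathrm v}(\pi)$ to the blue letters lists the blue points bottom to top with their decorations. The map $\delta$ preserves the relative value order among blue points and also their horizontal positions, so it preserves which blue points are right-to-left maxima among blue points. This restriction therefore equals $u(\delta(\pi))$, the upper cell of the domino read bottom to top. The same argument applied to red points and left-to-right minima shows that the restriction to red letters equals $\ell(\delta(\pi))$. Finally, by Proposition~\ref{prop:BR-free}, $w_{\mathrm v}(\pi)$ avoids the factor $\lb\lr$, so $w_{\mathrm v}(\pi)\in\SSS(u(\delta(\pi)),\ell(\delta(\pi)))$.

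Summing the bound on each fibre over all $\delta\in\DDD_n$ completes the proof. No step is a real obstacle. The only point needing care is that decorations are computed within each colour class, so they are invariant under $\delta$; this must be stated explicitly, because decorations computed relative to the whole permutation would not be invariant.
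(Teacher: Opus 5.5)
Your proposal is correct and follows the paper's proof. The paper likewise gets injectivity of $\pi\mapsto(\delta(\pi),w_{\mathrm v}(\pi))$ by forgetting decorations to recover $\chi(\pi)$, and uses Proposition~\ref{prop:BR-free} for membership in $\SSS(u(\delta(\pi)),\ell(\delta(\pi)))$; your explicit check that decorations are computed within each colour class, and hence that the colour restrictions of $w_{\mathrm v}(\pi)$ are exactly $u(\delta(\pi))$ and $\ell(\delta(\pi))$, spells out what the paper only notes in passing before the proposition.
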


\begin{proof}
The map
\[
 \pi\longmapsto\bigl(\delta(\pi),w_{\mathrm v}(\pi)\bigr)
\]
is injective, since forgetting the decorations from $w_{\mathrm v}(\pi)$
recovers $\chi(\pi)$ and the earlier map
$\pi\mapsto(\delta(\pi),\chi(\pi))$ is injective. By
Proposition~\ref{prop:BR-free}, the second coordinate is a
$\lb\lr$-avoiding shuffle of $u(\delta(\pi))$ and
$\ell(\delta(\pi))$.
\end{proof}

%
%
%
%
%
%
%
%
%
%
%
%
\subsection{Overview of the method}\label{subsec:overview}

The precise words $u(\delta)$ and $\ell(\delta)$ contain more information than we can conveniently retain while summing over dominoes. Instead, we seek a bound depending on two simpler statistics $i(\delta)$ and $r(\delta)$ which we now introduce.

Define
\[
 i(\delta)=|u(\delta)|_{\lb}+|\ell(\delta)|_{\lr}
\]
where $|w|_\mathsf{X}$ counts the number of occurrences of letter $\mathsf{X}$ in the word $w$, and
\[
 r(\delta)=\mathop{runs}_\lb(u(\delta))+\mathop{runs}_\lr(\ell(\delta)),
\]
where $\mathop{runs}_\mathsf{X}(w)$ is the number of maximal nonempty runs of the letter $\mathsf{X}$
in $w$.  Thus, $i(\delta)$ records the number of internal points in the domino (each encoded either by $\lb$ or $\lr$), while $r(\delta)$ counts the sum of the number of internal runs in the two words $u(\delta)$ and $\ell(\delta)$.

The statistic $i$ is complementary to the leaf statistic used in~\cite[Section~6.1]{bbep:1324}, but $r$ does not seem to have appeared in the literature previously. However, there are some statistics in the literature with connections to one or both of the ones we use. The quantity $|u(\delta)|_{\lb}$ counts the internal points in the $\av(213)$ portion of a domino. If we denote the 213-avoiding permutation formed by all the points in this portion of the domino by $\sigma$, then in the inverse permutation $\sigma^{-1}$ (which also avoids 213), the internal points are precisely those that are at the start of an \emph{ascent}.  In other words, the statistic $i(\delta)$ is related to the ascent statistic on 132- and 213-avoiding permutations, which Simion~\cite{Simion1994} showed are counted by the Narayana numbers (OEIS sequence A001263). Similarly, the statistic $r(\delta)$ is related to counts of \emph{ascending runs} -- see Park and Park~\cite{park:avoiding-permutations} and Barnabei, Bonetti, Castronuovo and Silimbani~\cite{barnabei:ascending-runs}.

Our aim is to derive an estimate that takes the form
\[ 
	s(u(\delta),\ell(\delta)) \le C\lambda^{|\delta|} x^{i(\delta)}y^{r(\delta)}
\]
for some fixed positive $C$, $\lambda$, $x$ and $y$. Substituting this into the inequality of Proposition~\ref{prop-1324-leq-shuffles} gives
\[
|\av_n(1324)| \leq C\lambda^n \sum_{\delta\in\DDD_n} x^{i(\delta)}y^{r(\delta)}.
\]
Now define $\displaystyle D(z;x,y) = \sum_{\delta\in\DDD} z^{|\delta|}x^{i(\delta)}y^{r(\delta)}$. (Thus $D(z;1,1)$ is the ordinary generating function for dominoes.) Let $d_n(x,y)$ denote the terms involving dominoes of length $n$, i.e.
\[
d_n(x,y) = [z^n]D(z;x,y) = \sum_{\delta\in\DDD_n}x^{i(\delta)}y^{r(\delta)}.
\]
We thus have the inequality
\[
|\av_n(1324)| \leq C\lambda^n d_n(x,y),
\]
from which we see that $\gr(\av(1324)) \leq \lambda \limsup_{n\to\infty}\sqrt[n]{d_n(x,y)}$.

For this bound to be useful, we need to make informed choices for $\lambda$, $x$ and $y$, and we need to derive a form for the function $D(z;x,y)$, which counts dominoes weighted by their internal points and maximal runs of internal points.

%
%
%
%
%
%
%
%
%
%
%
%
\section{Weighted dominoes and arch systems}
\label{sec:weighted-dominoes}

In this section, we derive an expression for $D(z;x,y)$ that enables us to calculate, for any given $x$ and $y$, the quantity $\limsup_{n\to\infty}\sqrt[n]{d_n(x,y)}$. The derivation follows that used in~\cite{bbep:1324}.

\subsection{Arch systems}

We say that an $n$-point \emph{arch system} comprises $n$ points on a line, together with a (possibly empty) collection of noncrossing arcs, all on the same side of the line, with the property that no point is the left endpoint of more than one arc, and no point is the right endpoint of more than one arc. As observed in~\cite[Proposition 3.2]{bbep:1324}, arch systems on $n$ points are counted by the Catalan numbers, and are in bijection with both $\av_n(132)$ and $\av_n(213)$. We refer the reader to~\cite{bbep:1324} for the details, but here highlight the fact that each arc corresponds to an occurrence of 12 having the form $k(k+1)$ for some $k$. See Figure~\ref{fig:archsystems}, and note that we have drawn the arch system corresponding to the 213-avoider above the line, and that for the 132-avoider below the line.

\begin{figure}[ht]
{\centering
\setplotptradius{3pt}
  \begin{tikzpicture}[scale=0.2,line join=round]
    \fill[blue!5] (.25,.25) rectangle (9.75,9.75);
    \draw[thick] (.25,.25) rectangle (9.75,9.75);
    \foreach \x in {1,...,9} \draw [gray,very thin] (\x,-3.75)--(\x,9.75);
    \draw [gray,thin] (0,-4)--(10,-4);
    \plotperm[blue!50!black]{7,0,0,1,3,0,4,0,0}
    \plotperm[blue!50!black,thick,hollow]{0,9,8,0,0,6,0,5,2}
    \draw [white,very thick] (1,-4.25) arc [radius=2,start angle=180,end angle=360];  
    \draw [blue!50!black,very thick] (3,-3.75) arc [radius=1,start angle=0,end angle=180];
    \draw [blue!50!black,very thick] (7,-3.75) arc [radius=1,start angle=0,end angle=180];
    \draw [blue!50!black,very thick] (8,-3.75) arc [radius=0.5,start angle=0,end angle=180];
    \draw [blue!50!black,very thick] (9,-3.75) arc [radius=2.5,start angle=0,end angle=180];
    \plotperm[blue!50!black]{-4, 0, 0,-4,-4, 0,-4, 0, 0}
    \plotperm[blue!50!black,thick,hollow]{ 0,-4,-4, 0, 0,-4, 0,-4,-4}
    \begin{scope}[shift={(20,-1)}]
    \fill[red!10] (.25,.25) rectangle (10.75,10.75);
    \draw[thick] (.25,.25) rectangle (10.75,10.75);
    \foreach \x in {1,...,10} \draw [gray,very thin] (\x,-1.25)--(\x,10.75);
    \draw [gray,thin] (0,-1.5)--(11,-1.5);
    \plotperm[red!50!black]{ 0,9,10, 0,0,0,3,6,0,7}
    \plotperm[red!50!black,thick,hollow]{8, 0, 0,5,4,2,0,0,1,0}
    \draw [red!50!black,very thick] (1,-1.75) arc [radius=0.5,start angle=180,end angle=360];
    \draw [red!50!black,very thick] (2,-1.75) arc [radius=0.5,start angle=180,end angle=360];
    \draw [red!50!black,very thick] (4,-1.75) arc [radius=2,start angle=180,end angle=360];
    \draw [red!50!black,very thick] (6,-1.75) arc [radius=0.5,start angle=180,end angle=360];
    \draw [red!50!black,very thick] (8,-1.75) arc [radius=1,start angle=180,end angle=360];
    \plotperm[red!50!black]{   0,-1.5,-1.5,    0,   0,   0,-1.5,-1.5,   0,-1.5}
    \plotperm[red!50!black,thick,hollow]{-1.5,   0,   0,-1.5,-1.5,-1.5,   0,   0,-1.5,   0}
  \end{scope}
  \end{tikzpicture}\par}
\caption{
A 213-avoider (left) and a 132-avoider (right)
with their corresponding arch systems. The right-to-left maxima and left-to-right minima have been drawn as hollow circles.}\label{fig:archsystems}
\end{figure}

It is worth noting that in the bijection between $\av(213)$ and arch systems, the right-to-left maxima correspond precisely to the points of the arch system that are not left endpoints of any arc. Similarly, the left-to-right minima of a permutation in $\av(132)$ correspond to points that are not right endpoints.

Using these two bijections, a domino corresponds to an interleaving of two arch systems: one above (for the 213-avoiding permutation in the upper cell) and one below (for the 132-avoider in the lower cell). To distinguish whether a point with no incident arcs belongs to the upper or lower system, we draw small half-arcs, as shown in Figure~\ref{fig:domino-to-system}.  The condition that the domino avoids 1324 translates into a single condition on the resulting interleaved arch configurations, which is as follows.

\begin{proposition}[{Bevan et al.~\cite[Proposition 3.3]{bbep:1324}}]
  The set $\DDD$ of dominoes is in bijection with arch configurations that do not contain the pattern $\archavoid$.
\end{proposition}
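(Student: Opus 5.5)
The plan is to build the bijection in two stages and then show that $1324$-avoidance corresponds exactly to avoiding the pattern $\archavoid$: a red (lower) arc from $a$ to $c$ and a blue (upper) arc from $b$ to $d$ with $a<b<c<d$ in position.

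\textbf{Setting up the map.} A domino is determined by three pieces of data: the set of columns occupied by upper-cell points, the $213$-avoider formed by the upper cell, and the $132$-avoider formed by the lower cell. This is because the relative value order between the cells is forced, with every blue point above every red point. By \cite[Proposition~3.2]{bbep:1324}, each cell corresponds bijectively to an arch system. So a domino corresponds to an interleaving, by position, of an upper and a lower arch system. The half-arcs record which system each isolated point belongs to. Conversely, any such interleaved configuration determines a gridded permutation with a $213$-avoiding upper cell and a $132$-avoiding lower cell. It therefore only remains to show that this permutation avoids $1324$ if and only if the configuration avoids $\archavoid$.

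\textbf{Easy direction.} Suppose the configuration contains the pattern. An arc corresponds to values $k,k+1$ with $k$ to the left, so $a,c$ form a red $12$ and $b,d$ form a blue $12$. Red lies below blue, so positions $a,b,c,d$ carry relative values $1,3,2,4$, which is a $1324$.

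\textbf{Converse.} Suppose the domino contains an occurrence of $1324$.
\begin{itemize}
\item First determine the colouring. The red points form a down-set of the four values.
\item If values $1,2,3$ are all red, they form a red $132$, which is impossible.
\item If at most value $1$ is red, then values $3,2,4$ are blue and form a blue $213$, also impossible.
\item Hence the occurrence is red at positions $a,c$ (values $1,2$) and blue at $b,d$ (values $3,4$).
\end{itemize}
Now choose such an occurrence minimising the gap in red value rank between $a$ and $c$. Let $a'$ be the red point of value one more than $a$ among the reds, and compare its position with the others.
\begin{itemize}
\item If $a'=c$, there is a red arc from $a$ to $c$.
\item If $a'$ lies left of $a$, or between $a$ and $b$, then $(a',b,c,d)$ is a $1324$ with a smaller gap.
\item If $a'$ lies between $b$ and $c$, then $(a,b,a',d)$ is a $1324$ with a smaller gap.
\item If $a'$ lies right of $c$, then $a,c,a'$ form a red $132$.
\end{itemize}
Each of the last three cases contradicts the choice of occurrence or the cell condition, so $a$ and $c$ are joined by an arc. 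Fixing $a,c$, the symmetric argument for blue (using $213$-avoidance, and minimising over the value of $d$ relative to $b$) shows that $b$ and $d$ can be taken to be joined by a blue arc. This produces the forbidden pattern.

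The main obstacle is this extremal replacement step. One has to check that every case analysis really leaves an occurrence of $1324$ with the same colour pattern, and that the two minimisations (red, then blue) do not interfere with each other. Doing them sequentially handles this: the blue adjustment only moves $b$ and $d$, and it keeps $b$ between $a$ and $c$ in position.
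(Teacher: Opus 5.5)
Your proof is correct. The paper gives no proof of this statement: it is quoted from \cite{bbep:1324} and used as a black box when setting up the arch-prefix functional equation. So your argument is not a variant of in-paper reasoning but a self-contained replacement for the citation.

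Its content is two steps. First, the colour-forcing observation: any $1324$ in a domino has its `1' and `2' in the lower cell and its `3' and `4' in the upper cell. Second, an extremal choice that makes both pairs value-consecutive within their cells. The citation buys brevity; your version makes visible that only three facts are needed, namely $132$-avoidance below, $213$-avoidance above, and the description of arcs as value-consecutive ascents.

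Two steps you leave implicit do check out.
\begin{itemize}
\item Concluding an arc from $a'=c$ uses the converse of the paper's phrasing: every cell-consecutive pair $k,k+1$ with $k$ on the left is joined by an arc. This holds because such a $k+1$ is not a left-to-right minimum, hence is a right endpoint, and its arc must start at $k$. The upper cell is dual, with right-to-left maxima and left endpoints.
\item The blue step does keep $a$ and $c$ fixed. Let $d'$ be the blue value-predecessor of $d$, with $d'\neq b$. If $d'$ lies left of $b$, then $(d',b,d)$ is a blue $213$. If $d'$ lies between $b$ and $c$, then $(a,d',c,d)$ is an occurrence. If $d'$ lies right of $c$, then $(a,b,c,d')$ is one. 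Each occurrence has smaller blue gap.
\end{itemize}
The blue step is exactly your red argument transported by the $180^\circ$ rotation. That rotation swaps the two cells, interchanges $213$- and $132$-avoidance, and fixes both $1324$ and the forbidden arch pattern.
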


\begin{figure}
[ht]
{\centering
\setplotptradius{3pt}
  \begin{tikzpicture}[xscale=0.12,yscale=0.24,line join=round]
    \fill[blue!5,draw=black,thick] (.25,.25) rectangle (19.75,9.5);
    \fill[red!10,draw=black,thick] (.25,.25) rectangle (19.75,-10.5);
    
    \foreach \x/\y in {3/7,7/1,9/3,12/4 }
    	\node[permpt,blue!50!black] at (\x,\y) {};
    \foreach \x/\y in {4/9, 5/8,11/6,14/5,17/2}
    	\node[permpt,blue!50!black,thick,hollow] at (\x,\y) {};
    
    \foreach \x/\y in {2/2,6/1,15/8,16/5,19/4}
    	\node[permpt,red!50!black] at (\x,-\y) {};
    \foreach \x/\y in {1/3,8/6,10/7,13/9,18/10}
    	\node[permpt,red!50!black,thick,hollow] at (\x,-\y) {};

    \begin{scope}[shift={(25,-1)},xscale=2]
    \bluePt{4}
    \bluePt{11}
    \redPt{10}
    \redPt{18}
    \foreach \s/\len in {1/.5,2/2,8/4,13/1,16/1.5} {\redArc{\s}{\len}}
    \foreach \s/\len in {17/5,14/1,12/1.5,5/1} {\blueArc{\s}{\len}}
    \draw [gray,thin] (0,1)--(20,1);
    \plotperm[blue!50!black]            {0,0,1,0,0,0,1,0,1,0,0,1,0,0,0,0,0,0,0}
    \plotperm[blue!50!black,hollow,thick] {0,0,0,1,1,0,0,0,0,0,1,0,0,1,0,0,1,0,0}
    \plotperm[red!50!black]                   {0,1,0,0,0,1,0,0,0,0,0,0,0,0,1,1,0,0,1}
    \plotperm[red!50!black,hollow,thick]{1,0,0,0,0,0,0,1,0,1,0,0,1,0,0,0,0,1,0}
    \end{scope}
  \end{tikzpicture}
\par}
\caption{A domino and its corresponding pair of arch systems. Note that left-to-right minima and right-to-left maxima have been drawn as hollow circles.}\label{fig:domino-to-system}
\end{figure}

%
%
%
\subsection{Deriving and solving the functional equation}

Our remaining task in this section is to derive and solve the functional equation for $D(z;x,y)$, via the $\archavoid$-avoiding pairs of arch systems (and thus of dominoes). The method of~\cite{bbep:1324} works here with few changes -- there are no particular additional complications as a result of the two variables tracking the statistics $i(\delta)$ and $r(\delta)$.

We begin by interpreting the statistics $i(\delta)$ and $r(\delta)$ in the context of arch configurations.  
Since each arc joins consecutive values in increasing position order, a connected chain of $k\geq1$ arcs corresponds to $k+1$ consecutive increasing values. In the upper system, all but the last of these points are internal, while in the lower system all but the first are internal. This shows that, under the arch-system bijection, the points of the domino encoded by $\lb$ are exactly the left endpoints of upper arcs, while the points encoded by $\lr$ are the right endpoints of lower arcs.   Moreover, the nontrivial maximal connected chains correspond precisely to the maximal $\lb$-runs and maximal $\lr$-runs in the words $u(\delta)$ and $\ell(\delta)$. That is, the statistic $i(\delta)$ counts the number of left endpoints of upper arcs and right endpoints of lower arcs, while $r(\delta)$ counts the number of chains of upper or lower arcs.

As in~\cite{bbep:1324}, we build the $\archavoid$-avoiding arch configurations from left to right. An \emph{arch prefix}, is an initial portion of such a configuration, possibly with open arcs whose right-hand endpoints lie outside the prefix.  Let $\AAA$ be the set of arch prefixes with no open upper arcs, and let
\[
 A(v)=A(z,v;x,y)
\]
be their generating function, where $z$ marks points and $v$ marks open
lower arcs.  The variable $x$ marks left endpoints of upper arcs and right
endpoints of lower arcs, and $y$ marks the nontrivial maximal connected chains. We adopt the convention that we mark each nontrivial chain when it is completed: thus, if an arch prefix has one or more open lower arcs, these open arcs have not yet contributed to $y$.

Since a complete configuration has no open arcs, note that $D(z;x,y)=A(0)$.

\begin{proposition}
\label{prop:domino-functional-equation}
The series $A(v)$ satisfies
\begin{equation}
\label{eq:domino-functional-equation}
\begin{aligned}
 A(v)={}&1+2zA(v)+zvA(v)
 +xyz\frac{A(v)-A(0)}{v}\\
 &\quad+xz\bigl(A(v)-A(0)\bigr)
 +\frac{xyz^2A(v)^2}{1-xzA(v)}.
\end{aligned}
\end{equation}
\end{proposition}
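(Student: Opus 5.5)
The plan is to follow the derivation of the corresponding equation in~\cite{bbep:1324} and decompose a nonempty arch prefix in $\AAA$ according to its final point, while keeping track of the weights $x$ and $y$. Every point belongs to exactly one of the two systems: upper points are blue, lower points are red. The final point of a prefix in $\AAA$ is never the left endpoint of an upper arc, because that arc would remain open. So the final point is either a lower point, or an upper point that is not a left endpoint. I would first justify the following fact: since lower arcs are noncrossing, the open lower arcs of a prefix are nested. A newly added closing point must therefore close the innermost open arc. Consequently a prefix with open lower arcs is determined by its $v$-weighted count, and removing or adding closing points acts on $A(v)$ by division or multiplication by $v$.

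The lower-point cases give the first five terms, read off by whether the last point closes an open lower arc and whether it opens a new one. The term $1$ is the empty prefix. An isolated point, either upper with no arcs or lower with no arcs, gives $2zA(v)$. A lower point that opens an arc but closes none gives $zvA(v)$. A lower point that closes the innermost open arc but opens none requires a prefix with at least one open arc. It contributes $x$, since it is a right endpoint of a lower arc, and $y$, since its chain is now complete. This gives $xyz(A(v)-A(0))/v$. A lower point that closes an arc and opens a new one gets weight $x$ but not $y$, because its chain continues and the factors of $v$ cancel. This gives $xz(A(v)-A(0))$.

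The remaining case is an upper point ending a maximal upper chain $p_0<p_1<\dots<p_k$ with $k\ge1$. I would decompose the prefix as $Q\,p_0\,G_1\,p_1\,G_2\cdots G_k\,p_k$. Here $Q$ is the part of the prefix before $p_0$, and $G_j$ is the segment strictly between $p_{j-1}$ and $p_j$. The main obstacle is to show that each of $Q,G_1,\dots,G_k$ is an arbitrary element of $\AAA$, with the $v$-weights simply multiplying. Two facts are needed. First, noncrossing of upper arcs forces every upper arc starting in $G_j$ to end inside $G_j$. Second, avoidance of $\archavoid$ says exactly that no lower arc opened to the left of $p_{j-1}$ may close inside $G_j$. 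Hence each $G_j$ is a self-contained prefix with no open upper arcs, and its own open lower arcs simply remain open. Conversely, any such concatenation avoids the pattern and is noncrossing. This gives the factor $z^{k+1}A(v)^{k+1}x^ky$. Summing over $k\ge1$ yields $xyz^2A(v)^2/(1-xzA(v))$. The last step is to check that the six cases are disjoint and exhaustive, which completes the functional equation.
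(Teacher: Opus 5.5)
Your proposal is correct and follows the paper's proof essentially step for step. You decompose by the rightmost point into the same cases: the empty prefix, the four kinds of lower point, the isolated upper point, and the maximal upper chain $Q\,p_0\,G_1\cdots G_k\,p_k$ contributing $yx^kz^{k+1}A(v)^{k+1}$. Your added justifications fill in what the paper conveys only through its diagram: nesting of open lower arcs forces which arc is closed, and noncrossing together with $\archavoid$-avoidance makes each segment an independent member of $\AAA$. One phrase needs rewording: ``a prefix is determined by its $v$-weighted count'' should say that there is exactly one way to close an arc, so that closing acts on $A(v)-A(0)$ as division by $v$.
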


\begin{proof}
Decompose a nonempty prefix according to its rightmost point, which may belong to either the upper or lower parts. If it is in the lower part, then it can be a lower
isolated point, a lower left endpoint, a lower right endpoint,  or a lower
point that is both a left and a right endpoint. These contributions are, respectively,
\[
 zA(v),\qquad zvA(v),\qquad
 xyz\frac{A(v)-A(0)}{v},\qquad
 xz\bigl(A(v)-A(0)\bigr).
\]
If it is in the upper part, then either it is an upper isolated point (with contribution $zA(v)$), or it terminates a connected sequence of $k\geq1$ upper arcs, as in the following diagram.

{\centering
\begin{tikzpicture}[scale=0.2,line join=round]
    \draw [white] (1,4.5)--(1,-10); 
    \fillBlueArc{6}{2.5}
    \fillBlueArc{12}{2}
    \fillBlueArc{17}{1.5}
    \fillBlueArc{25}{3}
    \draw [gray,thin] (0,1)--(27,1);
    \draw [gray!60!black,thick] (25.5,6)--(25.5,-9.5);
    \blueArc{13}{3}
    \blueArc{18}{2.5}
    \blueArc{26}{4}
    \node[] at (3.35,2.25) {$\AAA$};
    \node[] at (9.85,2) {$\AAA$};
    \node[] at (15.35,1.825) {$\AAA$};
    \node[] at (21.85,2.25) {$\AAA$};
    \begin{scope}
      \clip (0,-10) rectangle (26.75,6);
      \redArc{3}{19.5 and 9}
      \redArc{5}{18 and 8}
      \redArc{11}{13 and 6}
      \redArc{19}{6.5 and 4.5}
      \redArc{23}{3.5 and 2.5}
      \redArc{24}{2.5 and 1.5}
    \end{scope}
    \plotperm[blue!50!black]{0,0,0,0,0,0,1,0,0,0,0,0,1,0,0,0,0,1,0,0,0,0,0,0,0,1}
    \plotperm[red!50!black]{0,0,1,0,1,0,0,0,0,0,1,0,0,0,0,0,0,0,1,0,0,0,1,1,0,0}
  \end{tikzpicture}
\par}

This sequence contains $k$ left endpoints and forms a single chain of upper arcs, and hence its contribution is $yx^kz^{k+1}A(v)^{k+1}$. Summing over $k$, we obtain
\[
 y\sum_{k\geq1}x^kz^{k+1}A(v)^{k+1}
 =\frac{xyz^2A(v)^2}{1-xzA(v)}.
\]
Adding the empty prefix gives~\eqref{eq:domino-functional-equation}.
\end{proof}

To solve the functional equation, clear the denominators in
\eqref{eq:domino-functional-equation} and rearrange to obtain
\[
 P(A(v),A(0),z,v;x,y)=0,
\]
where
\begin{align*}
 P(X,Y,z,v;x,y)={}&xz\bigl(v^2z+vxz-vyz+2vz-v+xyz\bigr)X^2\\
 &+\bigl(v-v^2z-2vz -xyz-Yx^2z^2(v+y)\bigr)X\\
 &+Yxz(v+y)-v.
\end{align*}
We now use the method of iterated discriminants of Bousquet-M\'elou and Jehanne~\cite{bousquet-melou:polynomial-equations:}.
The iterated discriminant is
\[
Q(Y,z;x,y) = \operatorname{disc}_v\operatorname{disc}_X P= -256x^4y^4z^8\,R_1(Y,z;x,y)^2R_2(Y,z;x,y),
\]
where
\[
R_1(Y,z;x,y) = x^3z^3Y^2 +xz\bigl(1+(y-3x-2)z\bigr)Y +2(x+1)z-1
\]
and
\begin{align*}
R_2(Y,z;x,y)={}&x^4z^4\bigl(1+2(y-1)z\bigr)Y^3\\
&+x^2z^2\Bigl(2+(18y-4x-8)z\\
&\hspace{26mm}+(-x^2-6xy+8x+27y^2-36y+8)z^2\Bigr)Y^2\\
&+\Bigl(1-(4x+6)z+(2x^2-18xy+16x+12)z^2\\
&\hspace{26mm}+(4x^3-12x^2y-4x^2+36xy-16x-8)z^3\Bigr)Y\\
&-1+(4x+4)z+(-4x^2+16xy-8x-4)z^2
\end{align*}
is a cubic in $Y$. Now, the minimum polynomial for $Y=A(0)$ divides one of the factors of $Q(Y,z;x,y)$, since $Q(D,z;x,y)=0$. However
\[
R_1(D(z;x,y),z;x,y)=-1+O(z),
\]
does not vanish. Thus the minimum polynomial for $A(0)=D$ divides $R_2$, and we have established the following proposition.
\begin{proposition}
\label{prop:domino-cubic}
The series $D=D(z;x,y)$ satisfies
\begin{equation}
\label{eq:domino-cubic}
 R_2(D,z;x,y)=0,
\end{equation}
where $R_2$ is as defined above.
\end{proposition}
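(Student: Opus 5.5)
The plan is to apply the Bousquet-Mélou--Jehanne method to the functional equation of Proposition~\ref{prop:domino-functional-equation}, and then to identify which factor of the resulting iterated discriminant annihilates $D$.

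\textbf{Step 1: polynomial form.} Multiply \eqref{eq:domino-functional-equation} by $v\bigl(1-xzA(v)\bigr)$ and collect terms. This gives $P(A(v),A(0),z,v;x,y)=0$, where $P$ is quadratic in $X=A(v)$. I will check directly that the coefficients are the ones displayed above. The equation is well posed: as formal power series in $z$ (with $v$ a catalytic variable), $A(v)$ is uniquely determined by \eqref{eq:domino-functional-equation}, since each term on the right carries a factor $z$.

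\textbf{Step 2: the Bousquet-Mélou--Jehanne argument.} Regard $P$ as $P(X,v)$, with $Y=A(0)$ treated as an unknown. The general theorem supplies a fractional power series $V=V(z)$ in $z$ such that $\partial_X P(A(V),A(0),z,V)=0$. At such a point the three quantities
\[
P,\qquad \partial_X P,\qquad \partial_v P
\]
all vanish at $(A(V),V)$. For a quadratic in $X$, this is exactly a double root of $\operatorname{disc}_X P$ as a polynomial in $v$. Consequently $\operatorname{disc}_v\operatorname{disc}_X P$ vanishes at $Y=A(0)=D$.

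Existence of $V$ is the delicate point. I would argue as in \cite{bbep:1324}: the equation $\partial_XP=0$, solved for $v$, has the form $v=z\,\Phi(v,A(v),\dots)$ near $z=0$. The extra parameters $x,y$ only rescale coefficients and do not alter the valuation analysis, so the same unique small-series root exists.

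\textbf{Step 3: factorisation and choice of factor.} A computer algebra computation gives
\[
Q=-256x^4y^4z^8R_1^2R_2.
\]
Since $D\neq 0$, the series $D$ must satisfy $R_1(D)=0$ or $R_2(D)=0$. Substituting $D=1+O(z)$ into $R_1$ gives $R_1(D,z;x,y)=-1+O(z)$, so $D$ is not a root of $R_1$. Because $R_1$ and $R_2$ are polynomials and the ring of power series is an integral domain, it follows that $R_2(D,z;x,y)=0$.

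As a sanity check, I will confirm that the first few coefficients of $D$, computed by iterating \eqref{eq:domino-functional-equation}, satisfy $R_2=0$ to several orders. I will also check that at $x=y=1$ the cubic reduces to the known domino equation of \cite{bbep:1324}.

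\textbf{Main obstacle.} The hard step is Step 2: rigorously justifying the existence of the series $V$, including the non-degeneracy conditions of the general theorem, with the symbolic parameters $x,y$ present. I would first try to treat $x,y$ as formal parameters near $1$ and invoke the theorem over the field $\mathbb{Q}(x,y)$.
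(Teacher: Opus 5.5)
Your proposal is correct and follows the paper's route: clear denominators to get $P$, apply the Bousquet-M\'elou--Jehanne iterated discriminant $\operatorname{disc}_v\operatorname{disc}_X P=-256x^4y^4z^8R_1^2R_2$, and discard $R_1$ because $R_1(D,z;x,y)=-1+O(z)$. You also justify two steps the paper leaves implicit, namely the existence of $V$ (here an ordinary power series, $V=xyz+O(z^2)$) and the double-root argument. One small slip: what lets you pass from $Q(D)=0$ to $R_1(D)^2R_2(D)=0$ is that the prefactor $-256x^4y^4z^8$ is nonzero, not that $D\neq0$.
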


Note that at $x=y=1$, equation~\eqref{eq:domino-cubic} becomes
\[
 R_2(D,z;1,1) = z^4D^3+2z^2(3z+1)D^2+(12z^2-10z+1)D+8z-1=0,
\]
recovering the ordinary domino equation derived in~\cite{bbep:1324}. Also, for interest, we have
\begin{align*} D(z;x,y)={}&1+2z+(4+2xy)z^2\\ &+(8+12xy+2x^2y)z^3\\ &+(16+48xy+16x^2y+9x^2y^2+2x^3y)z^4+\cdots. \end{align*}

Finally for this section, let us record some facts about the asymptotics of $D$ that we will require later. For fixed positive $x$, $y$, let
\[\mu(x,y)=\limsup_{n\to\infty}d_n(x,y)^{1/n}.\]
Then $\mu(x,y)$ is the \emph{weighted growth rate} of the dominoes.

To compute $\mu$, we compute the discriminant of $R_2$ with respect to $D$. Since $R_2$ is a cubic in $D$, this is
\begin{equation}
\label{eq:domino-discriminant}
 \disc_D (R_2)
 =-x^4yz^5\Phi(z;x,y)^3,
\end{equation}
where
\begin{align*}
\Phi(z;x,y)={}&-4+\Big(12x-9y+24\Big)z\\
&+\Big(-12x^2+72xy-48x+36y-48\Big)z^2\\
&+\Big(4x^3-36x^2y+24x^2+108xy^2-144xy+48x-36y+32\Big)z^3.
\end{align*}
In particular, when $x=y=1$, we have $\Phi(z;1,1)=27z-4$.

\begin{theorem}\label{thm:weighted-domino-growth}
For every $x,y>0$, the polynomial $\Phi(z;x,y)$ has exactly one zero $\rho(x,y)$ in $(0,\tfrac12)$. This zero is the radius of convergence of $D(z;x,y)$, and hence
\[
\mu(x,y)=\frac1{\rho(x,y)}.
\]
\end{theorem}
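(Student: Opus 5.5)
My plan has two parts: first the elementary claim that $\Phi(\cdot;x,y)$ has exactly one zero in $(0,\tfrac12)$, then the identification of that zero with the radius of convergence of $D(z;x,y)$.

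\textbf{Locating the zero.} Evaluate $\Phi$ at the endpoints. We have $\Phi(0;x,y)=-4<0$. A direct computation of $8\Phi(\tfrac12;x,y)$ should collapse to a manifestly nonnegative expression; I expect something like $(x-\cdots)^2$ times positive factors plus $27y^2x$-type terms. From this I aim to show $\Phi(\tfrac12)>0$, giving a sign change and hence at least one zero.

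For uniqueness I will argue that $\Phi$ is a cubic with negative constant term, so its three roots have product $4/c_3$, where $c_3$ is the cubic coefficient. I will then use Descartes' rule, or the sign pattern of $\Phi$ rewritten in the variable $t=1/z$, to show that at most one root lies in $(0,\tfrac12)$. If that fails, a fallback is to show $\Phi'(z)>0$ wherever $\Phi(z)=0$ on the interval, for instance by checking the resultant of $\Phi$ and $\Phi'$ in $z$ and confirming that no double root enters the interval as $(x,y)$ vary continuously from $(1,1)$. At $(1,1)$ the unique root is $4/27$. Connectedness of $\{x,y>0\}$ together with the nonvanishing of $\Phi$ at both endpoints then fixes the count at one.

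\textbf{Identifying the radius of convergence.} Since $D$ has nonnegative coefficients, Pringsheim's theorem says its radius of convergence $\rho$ is a singularity on the positive axis. $D$ is algebraic, being a root of the cubic $R_2$, so its singularities lie among the zeros of the leading coefficient $x^4z^4(1+2(y-1)z)$ and of $\disc_D(R_2)=-x^4yz^5\Phi^3$. Hence $\rho$ is either $0$, the point $1/(2(1-y))$ (which arises only when $y<1$), or a positive root of $\Phi$.

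To rule out $\rho=0$, I bound $d_n(x,y)\le \max(1,x)^n\max(1,y)^n|\DDD_n|$, which gives $\rho\ge\min(1,1/x)\min(1,1/y)\cdot 4/27>0$.

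Next I need the upper bound $\rho<\tfrac12$. Dominoes include all permutations of one cell with many configurations, so I expect $d_n\ge 2^n$ from the $2^n$ dominoes with no arcs. These have weight $1$, as the $2z$ term in $A$ suggests. Then $\rho\le\tfrac12$, and strict inequality should follow from additional arcs contributing positively.

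It then remains to exclude the leading-coefficient zero. At $z=1/(2(1-y))$ with $y<1$, we have $z\ge\tfrac12$ whenever $y\ge0$, so it lies outside $(0,\tfrac12)$ and cannot be $\rho$. Hence $\rho$ is a zero of $\Phi$ in $(0,\tfrac12)$, and by uniqueness it equals $\rho(x,y)$. One must also confirm that the branch $D$ genuinely is singular there rather than another branch of the cubic. This is automatic, since Pringsheim guarantees that $\rho$ is a singularity of $D$ itself.

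Finally, $\mu=1/\rho$ follows from Cauchy–Hadamard.

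\textbf{Main obstacle.} I expect the hardest step to be the uniform sign and uniqueness analysis of $\Phi$ over all $x,y>0$, since its coefficients change sign. I would tackle it first by factoring $\Phi(\tfrac12)$ and the resultant symbolically.
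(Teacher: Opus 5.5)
Your identification of the radius of convergence is essentially the paper's argument: the bounds $2^n\le d_n(x,y)\le M^n d_n(1,1)$, Pringsheim, singularities among the zeros of the leading coefficient $x^4z^4(1+2(y-1)z)$ and of $\disc_D R_2=-x^4yz^5\Phi^3$, exclusion of $1/(2(1-y))>\tfrac12$, and Cauchy--Hadamard. One repair is needed. ``Strict inequality should follow from additional arcs contributing positively'' is not a valid inference, since adding nonnegative terms to $\sum 2^nz^n$ need not shrink the radius. You do not need it: $\rho\le\tfrac12$, $\Phi(\rho)=0$ and $\Phi(\tfrac12;x,y)=\tfrac{x}{2}(x^2-9xy+27y^2)=\tfrac x2\bigl((x-\tfrac92y)^2+\tfrac{27}{4}y^2\bigr)>0$ already force $\rho<\tfrac12$.

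The genuine gap is the uniqueness of the zero in $(0,\tfrac12)$. You leave it as a plan, and your primary tool fails as stated. Descartes' rule applied to $\Phi$ in $z$, or to its reversal in $1/z$, counts roots on the whole positive half-line, and $\Phi$ really can have three positive roots. At $x=y=\tfrac12$ one has $\Phi=(1-z)^2(\tfrac{35}{2}z-4)$, with coefficient signs $-,+,-,+$ and roots $\tfrac{8}{35}$ and a double root at $1$. The missing idea is to localise to the interval before counting sign changes. The paper substitutes $z=u/(1+2u)$, which maps $(0,\infty)$ bijectively onto $(0,\tfrac12)$, so that Descartes applied to
\[
Q(u)=(1+2u)^3\Phi\bigl(\tfrac{u}{1+2u}\bigr)=4x(x^2-9xy+27y^2)u^3+12x(6y-x)u^2+3(4x-3y)u-4
\]
counts exactly the roots in the interval. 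This gives one sign change when $x\le 6y$. When $x>6y$, Descartes allows one or three roots, and the paper closes that case with $\disc_uQ=-314928\,xy^2(x-y)^3<0$. In your $t=1/z$ language, the right variable is the shifted one, $t-2=1/u$.

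Your fallback, continuing from $(1,1)$ while excluding repeated roots in the interval, is a legitimate alternative route. As written, though, its whole content is the unverified claim that no repeated root ever lies in $(0,\tfrac12)$. To complete it you need the same discriminant. The substitution corresponds to the linear map $(u,v)\mapsto(u,2u+v)$ of determinant one, so $\disc_z\Phi=\disc_uQ$ as binary cubics. Hence for $x,y>0$ repeated roots occur only on $x=y$. There $\Phi(z;x,x)=(1+2(x-1)z)^2\bigl((19x+8)z-4\bigr)$, whose only possible finite repeated root $1/(2(1-x))$ (for $x\neq1$) lies outside $(0,\tfrac12]$. You must also handle the drop in degree ($\Phi$ is linear at $(1,1)$). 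This is harmless only because roots escaping to infinity cannot reach the bounded interval, and $\Phi(0)=-4$ and $\Phi(\tfrac12)>0$ never vanish. With these facts the continuity argument works. However, it rests on the same discriminant computation as the paper and is longer than the direct sign count after the substitution.
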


\begin{proof}
Fix $x,y>0$, and write
\[
 d_n(x,y)=[z^n]D(z;x,y)
 =\sum_{\delta\in\DDD_n}x^{i(\delta)}y^{r(\delta)}.
\]
Let $\rho$ be the radius of convergence of $D(z;x,y)$.  There are
$2^n$ dominoes of size $n$ in which both cells are formed of decreasing sequences of points (since there are no restrictions on the interleavings available).  Every point of such a
domino is a left-to-right minimum or right-to-left maximum, so each has weight~$1$.  Moreover, since
$0\leq i(\delta),r(\delta)\leq n$ for any domino $\delta$, we have
\[
 2^n\leq d_n(x,y)\leq M^n d_n(1,1),
 \qquad \text{where }M=\max\{1,x\}\max\{1,y\}.
\]
The ordinary domino sequence has finite exponential growth, and hence
\[
 0<\rho\leq\frac12.
\]
Since the coefficients are nonnegative, Pringsheim's theorem
\cite[Theorem~IV.6]{flajolet:analytic-combin:} implies that $z=\rho$ is a singularity.

The finite singularities of an algebraic function satisfying a
polynomial equation are contained among the zeros of the leading
coefficient and the discriminant of that polynomial; see
\cite[Section~VII.7]{flajolet:analytic-combin:}.  The leading coefficient of
$R_2(D,z;x,y)$ in~\eqref{eq:domino-cubic}, viewed as a polynomial in $D$,
is
\[
 x^4z^4\bigl(1+2(y-1)z\bigr).
\]
For $0<z\leq\tfrac12$ this is nonzero, since
\[
 1+2(y-1)z=(1-2z)+2yz>0.
\]
Thus $\rho$ is a zero of the discriminant.  By equation~\eqref{eq:domino-discriminant}, we conclude that $\Phi(\rho;x,y)=0$.

It remains to show that $\Phi(z;x,y)$ has exactly one zero in
$(0,\tfrac12]$.  The substitution $z=u/(1+2u)$ maps $(0,\infty)$
bijectively onto $(0,\tfrac12)$.  Put
\[
\begin{aligned}
 Q(u)&=(1+2u)^3\Phi\!\left(\frac{u}{1+2u};x,y\right)\\
 &=4x(x^2-9xy+27y^2)u^3
   +12x(6y-x)u^2+3(4x-3y)u-4.
\end{aligned}
\]
With a view to applying Descartes' rule of signs, we consider the signs of the coefficients.
The leading coefficient of $Q$ is positive, since
\[
x^2-9xy+27y^2
=
\left(x-\frac92y\right)^2+\frac{27}{4}y^2>0.
\]

Now if $x\leq6y$, the coefficient of $u^2$ is non-negative, and thus (regardless of the sign of the coefficient of $u$) the coefficients of $Q$ have exactly one sign change.  Thus, when $x\leq 6y$ we conclude that $Q$ has exactly one positive root.

If $x>6y$, the coefficient of $u^2$ is negative, and that of $u$ is positive. Thus there are one or three positive roots in this case. We have
\[
 \disc_u Q
 =-314928\,xy^2(x-y)^3<0,
\]
so $Q$ has exactly one real root.  Since $Q(0)=-4$ and its leading
coefficient is positive, this root is positive.

Hence in either case $Q$ has exactly one positive root, and thus $\Phi(z;x,y)$ has exactly one zero in $(0,\tfrac12)$.  Furthermore,
\[
 \Phi\!\left(\frac12;x,y\right)
 =\frac{x}{2}(x^2-9xy+27y^2)>0,
\]
so $\tfrac12$ is not a zero.

It follows that $\rho$ is the unique zero of $\Phi(z;x,y)$ in
$(0,\tfrac12)$, and therefore its smallest positive zero.  Finally,
the Exponential Growth Formula \cite[Theorem~IV.7]{flajolet:analytic-combin:} gives
\[
 \mu(x,y)=\limsup_{n\to\infty}d_n(x,y)^{1/n}=\rho^{-1},
\]
as required.
\end{proof}

The function $\rho(x,y)$ defines a surface, whose structure we will largely leave unexplored. However, before we finish this section, we observe that when $x=y$, then we can factorise $\Phi$ as
\begin{align*}
\Phi(z;x,x) &= -4+(3x+24)z +(60x^2-12x-48)z^2 +(76x^3-120x^2+12x+32)z^3\\
&= \bigl(1+2(x-1)z\bigr)^2 \bigl((19x+8)z-4\bigr).
\end{align*}
Thus $\mu(x,x) = (19x+8)/4$.

%
%
%
%
%
%
%
%
%
%
%
%
\section{Shuffle automata}
\label{sec:shuffle-automata}

Fix a domino $\delta$, and let $u=u(\delta)$ and $\ell=\ell(\delta)$. In the previous section, we enumerated dominoes while recording their
internal points and internal runs.  In this section, we use these two statistics in order to bound the quantity $s(u,\ell)$, which counts the number of shuffles of $u$ and $\ell$ that avoid the factor $\lb\lr$. To do this we adopt some of the language of the \emph{shuffle automata} considered by Cunningham~\cite{cunningham:on-shuffling:}.

Informally, a shuffle automaton $\AAA$ is a state machine with two input tapes and one output tape. During each state transition, the next letter from one of the input tapes is copied to the output tape. The machine accepts when both input tapes are exhausted, with the resulting shuffled word stored on the output tape. In other words, every state of the automaton can be an accepting state, when both tapes have been exhausted. We note that a formal definition of such a machine (as given in~\cite{cunningham:on-shuffling:}) adopts the viewpoint of a monoid $\Sigma^*\times\Sigma^*\times\Sigma^*$, where $\Sigma$ is the underlying alphabet of the shuffled word (in our case, $\{\lbo,\lb,\lr,\lro\}$), but we do not require that here.

For our purposes, we will regard the two input tapes as being \emph{red} and \emph{blue}, according to whether the input word on the tape is over the alphabet $\RRR=\{\lr,\lro\}$ or $\BBB=\{\lbo,\lb\}$, respectively. The set of states of the automaton will be denoted $Q$, and the transitions are formed as a subset of $Q\times Q\times \{\lbo,\lb,\lr,\lro\}$, indicating the source state, target state, and letter being written during the transition, respectively. Note that it is clear which input tape the letter being written comes from, since the two alphabets for the input words are disjoint.

Typically speaking, the machines we describe are nondeterministic, in the sense that the machine has a choice at each step as to which input tape the letter being written came from. Indeed, as we are attempting to count $s(u,\ell)$, we are interested in the number of different outputs available when a given shuffle automaton is initialised with the input words $u$ and $\ell$. 

We are interested only in shuffles of $u$ and $\ell$ that avoid the factor $\lb\lr$. To record this, the states of any such shuffle automaton must keep track of whether the most recent letter written to the output was $\lb$ or not: if it was $\lb$, then the next letter output cannot be $\lr$. The simplest automaton for this situation is as follows.

{\centering
\begin{tikzpicture}
\node[empty,align=center] (B) at (0,0) {($\lb$)};
\node[empty,align=center] (N) at (3,0) {($N$)};
\draw[thick] (B) edge[->,out=10,in=170] node[midway, above] {$\lbo,\lro$} (N);
\draw[thick] (N) edge[->,out=-170,in=-10] node[midway, below] {$\lb$} (B);
\draw[thick] (B) edge[->,loop,out=-150,in=150,looseness=5] node[midway,left] {$\lb$} (B);
\draw[thick] (N) edge[<-,loop,out=-30,in=30,looseness=5] node[midway,right] {$\lbo,\lr,\lro$} (N);
\end{tikzpicture}\par}

Here, we have labelled the states as ($\lb$), to indicate that the most recently written letter was $\lb$, and ($N$) to indicate the most recently written letter was not $\lb$. Note that there is no transition labelled $\lr$ from the state ($\lb$), as required to avoid $\lb\lr$-factors. To initialise this automaton, we start in state $N$ (since there are no output letters, and hence the previous output letter is certainly not $\lb$). The automaton accepts in either state, precisely when both input tapes are exhausted.

It is clear that this nondeterministic shuffle automaton can be used to generate all shuffles of two input words $u$ and $\ell$ that avoid $\lb\lr$. That is, it generates the whole of the set $\SSS(u,\ell)$. Furthermore, since the letters in the two input strings come from distinct alphabets, it follows that each output word is generated in exactly one way. Thus, the number of accepted runs of the above automaton is precisely $s(u,\ell) = |\SSS(u,\ell)|$.

\subsection{State weightings}
\label{sec:state-weightings}

We next give a simple device for counting the number of accepted runs of a shuffle automaton.  First, we need to set some notation concerning states and transitions of the shuffle automaton.

Let $\mathcal A$ be a shuffle automaton with (finite) state set $Q$.
For a run of $\AAA$ using fixed input words $u$ and $\ell$, an \emph{instantaneous description} (frequently abbreviated to ID) is a triple $(q;u',\ell')$ that records the state of $\AAA$ at a point during a run: the automaton is in state $q\in Q$, and $u'$ and $\ell'$ denote the unread suffixes of the input words $u$ and $\ell$.  An ID is \emph{reachable} if there exists some sequence of transitions from the initial state to the ID. 

Fix a reachable ID $(q;u',\ell')$. Let $s_q(u',\ell')$ denote the number of accepted runs beginning at this ID: thus $s_q(\epsilon,\epsilon)=1$, and if $q^*$ denotes the starting state of the automaton, then $s_{q^*}(u,\ell)$ is the total number of accepted runs generated by $\AAA$ with the input words $u$ and $\ell$.

Let $E(q;u',\ell')$ denote the set of transitions available from $(q;u',\ell')$. We call these the \emph{active} transitions for the reachable ID. Let $q_e$ denote the state of $Q$ reached by following the active transition $e\in E(q;u',\ell')$, and let $u_e'$ and $\ell_e'$ denote the suffixes in the two input tapes after this transition. (Note that exactly one of $u'=u_e'$ and $\ell'=\ell_e'$ holds, while the other suffix is now one letter shorter than before the transition.)  Thus, the possible IDs reachable in one step from $(q;u',\ell')$ are 
\[
\left\{ (q_e;u_e',\ell_e'): e \in E(q;u',\ell')\right\}.
\]
Thus, whenever $|u'|+|\ell'|>0$,
\begin{equation}
\label{eq:unweighted-shuffle-recurrence}
 s_q(u',\ell')
 =
 \sum_{e\in E(q;u',\ell')}
 s_{q_e}(u_e',\ell_e').
\end{equation}

The above gives us an iterative way to count the number of accepted runs on a given input. To make use of this, we seek to bound the quantities $s_q(u',\ell')$ in simpler terms. The mechanism we use for this is to apply weights to the states in $Q$.  Formally speaking, a \emph{state weighting} of $\mathcal A$ is a function
\[
 \kappa:Q\longrightarrow\mathbb R_{>0}.
\]

Fix $\lambda>0$.  We say that $\kappa$ is
\emph{$\lambda$-good} for $\mathcal A$ if
\begin{equation}
\label{eq:unweighted-certificate}
 \sum_{e\in E(q;u',\ell')}
 \kappa(q_e)
 \leq
 \lambda\kappa(q)
\end{equation}
for every pair of input words $u$ and $\ell$, and every reachable ID
$(q;u',\ell')$ with $|u'|+|\ell'|>0$.  It is worth noting that while the inequality must be satisfied at every reachable ID of the form $(q;u',\ell')$ (that is, an infinite number of inequalities), the number of distinct inequalities will always be finite.

\begin{proposition}
\label{prop:unweighted-certificate-bound}
Suppose that $\kappa$ is $\lambda$-good for a shuffle
automaton $\mathcal A$ with finite state set $Q$.  Then, for every reachable ID $(q;u',\ell')$,
\begin{equation}
\label{eq:unweighted-word-bound}
 s_q(u',\ell')
 \leq
 \frac{\kappa_+}{\kappa_-}
 \lambda^{|u'|+|\ell'|},
\end{equation}
where $\kappa_-=\min_{q\in Q}\kappa(q)$ and $\kappa_+=\max_{q\in Q}\kappa(q)$.
\end{proposition}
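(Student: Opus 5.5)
We prove the stronger intermediate claim that for every reachable ID $(q;u',\ell')$,
\[
 \kappa(q)\,s_q(u',\ell') \;\le\; \kappa_+\,\lambda^{|u'|+|\ell'|},
\]
by induction on $m=|u'|+|\ell'|$. Dividing by $\kappa(q)\ge\kappa_-$ then gives~\eqref{eq:unweighted-word-bound}. The difficulty is that the recurrence~\eqref{eq:unweighted-shuffle-recurrence} sums $s$-values, whereas the goodness condition~\eqref{eq:unweighted-certificate} sums $\kappa$-values. A naive induction with the constant $\kappa_+/\kappa_-$ would therefore multiply the constant at each step. The trick is to carry the ratio inside the induction via a weighted potential, and I would use the cleaner form
\[
 s_q(u',\ell')\le \frac{\lambda^{m}}{\kappa(q)}\cdot \kappa_+ .
\]
To make the induction close, I would first prove the sharper bound
\[
 s_q(u',\ell')\,\kappa(q) \le \lambda^m \max_{p}\kappa(p)
\]
by strengthening to a sum over final states.

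The more natural route is the following stronger statement. Define the weight of a run as $\kappa$ of its final state. The total final weight of accepted runs from $(q;u',\ell')$ is
\[
 W_q(u',\ell')=\sum_{\text{accepted runs}}\kappa(q_{\mathrm{final}}),
\]
and I claim $W_q(u',\ell')\le \lambda^m\kappa(q)$. The proof is by induction on $m$.

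For the base case $m=0$, the only run is the empty one, so $W=\kappa(q)$.

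For the inductive step, take $m>0$. Every accepted run from $(q;u',\ell')$ begins with some active transition $e\in E(q;u',\ell')$, and the ID $(q_e;u'_e,\ell'_e)$ it leads to is again reachable. Hence
\[
 W_q(u',\ell')=\sum_{e\in E(q;u',\ell')} W_{q_e}(u'_e,\ell'_e)\le \lambda^{m-1}\sum_{e}\kappa(q_e)\le\lambda^m\kappa(q),
\]
where the first inequality is the induction hypothesis and the second is $\lambda$-goodness, which applies since $m>0$.

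Finally, each accepted run has final weight at least $\kappa_-$, so
\[
 \kappa_-\,s_q(u',\ell')\le W_q(u',\ell')\le\lambda^m\kappa(q)\le\lambda^m\kappa_+,
\]
which is exactly~\eqref{eq:unweighted-word-bound}.

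The one point needing care is that the inductive step only ever invokes~\eqref{eq:unweighted-certificate} at reachable IDs with $m>0$. This holds because successors of reachable IDs are reachable, and because the sum over $E$ is exactly the first-step decomposition underlying~\eqref{eq:unweighted-shuffle-recurrence}. The case $E=\emptyset$ with $m>0$, a dead end, gives $W=0$, which is consistent with the bound. I expect no real obstacle beyond choosing the weighted quantity $W$ rather than $s$ itself.
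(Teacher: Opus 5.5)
Your $W$-argument is correct and is essentially the paper's proof. The paper inducts on $\kappa_-\,s_q(u',\ell')\le\kappa(q)\lambda^{|u'|+|\ell'|}$, which is your bound with $W_q$ replaced by the smaller quantity $\kappa_- s_q$, and it uses the same first-step decomposition and the same single application of $\lambda$-goodness.

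You should delete your opening paragraphs, however. The announced claim $\kappa(q)\,s_q(u',\ell')\le\kappa_+\lambda^m$ has $\kappa(q)$ on the wrong side, is not what the $W$-argument proves, and fails in general. For example, take an automaton whose only transition is $A\to B$, with $\kappa(A)=2$ and $\kappa(B)=1$. This weighting is $\tfrac12$-good, yet on a one-letter input $\kappa(A)\,s_A=2>1=\kappa_+\lambda$.
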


\begin{proof}
We claim, by induction on $|u'|+|\ell'|$, that the following inequality holds at any reachable ID $(q;u',\ell')$.
\[
 s_q(u',\ell')
 \leq
 \frac{\kappa(q)}{\kappa_-}
 \lambda^{|u'|+|\ell'|}.
\]

If $u'=\ell'=\epsilon$, then $s_q(\epsilon,\epsilon)=1$, and $1\leq\frac{\kappa(q)}{\kappa_-}$.

Otherwise, by~\eqref{eq:unweighted-shuffle-recurrence},~\eqref{eq:unweighted-certificate} and the induction
hypothesis,
\begin{align*}
 s_q(u',\ell') &=  \sum_{e\in E(q;u',\ell')}
 s_{q_e}(u_e',\ell_e')\\
 &\leq
 \sum_{e\in E(q;u',\ell')} \frac{\lambda^{|u'|+|\ell'|-1}}{\kappa_-}
\kappa(q_e)\\
 &\leq
 \frac{\kappa(q)}{\kappa_-}
 \lambda^{|u'|+|\ell'|}.
\end{align*}
The statement in the proposition now follows since $\kappa(q)\leq\kappa_+$.
\end{proof}

We now briefly return to the two-state automaton described above as an example.  From a reachable ID $(q;u',\ell')$, there is at most one active transition from each tape. Thus, if $\kappa: \{\lb,N\} \to \mathbb{R}_{>0}$ is a $\lambda$-good weighting, then the inequalities that must be satisfied are as follows.

{\centering
\begin{tabular}{c|c|c}
State&Active transition label(s)&Inequality\\\hline
$N$&$\{\lbo,\lro\}$ or $\{\lbo,\lr\}$& $\kappa(N) + \kappa(N) \leq \lambda \kappa(N)$\\
$N$&$\{\lb,\lro\}$ or $\{\lb,\lr\}$& $\kappa(\lb) + \kappa(N) \leq \lambda \kappa(N)$\\
$\lb$&$\{\lbo,\lro\}$&$\kappa(N) + \kappa(N) \leq \lambda \kappa(\lb)$\\
$\lb$&$\{\lb,\lro\}$&$\kappa(\lb) + \kappa(N) \leq \lambda \kappa(\lb)$\\
$\lb$&$\{\lbo\}$&$\kappa(N) \leq \lambda \kappa(\lb)$\\
$\lb$&$\{\lb\}$&$\kappa(\lb)  \leq \lambda \kappa(\lb)$\\
\end{tabular}\par}

Note that the final two rows (in which the next letter on the red tape is $\lr$) are automatically satisfied provided the inequality on the third-to-last row is satisfied. More significantly, the first row shows that $\lambda$-good weightings exist only if $\lambda\geq 2$. Indeed, it is straightforward to verify that the weighting $\kappa(\lb)=\kappa(N)=1$ is $2$-good.

It follows by Proposition~\ref{prop:unweighted-certificate-bound} that for any input words $u$ and $\ell$, we have
\[
 s(u,\ell)\leq2^{|u|+|\ell|}.
\]
Applying this to the two words associated with each $n$-point domino,
and recalling that $\gr(\DDD)=27/4$, we recover the upper bound from~\cite{bbep:1324}, namely
\[
 \gr(\av(1324))
 \leq
 2\cdot\frac{27}{4}
 =
 \frac{27}{2}.
\]
 It is worth remarking on the fact that this method gives no improvement on the naive upper bound from~\cite{bbep:1324}. This may come as a surprise, given the shuffles we consider here have the condition that they contain no $\lb\lr$-factor. This is because the estimate provided by Proposition~\ref{prop:unweighted-certificate-bound} cannot exploit the $\lb\lr$-factor avoidance: in the table of inequalities given above, the final two rows correspond to this limitation, but neither of these inequalities can ever be tight. 
 \subsection{Tracking internal points and runs}
\label{sec:tracking-internal-runs}

The two-state automaton records all the information required to enforce
the avoidance of the factor $\lb\lr$.  To make use of the weighted
enumeration of dominoes from the previous section, however, we require
a little more information. 

For words $u\in\BBB^*$ and $\ell\in\RRR^*$, write
\[
 i(u,\ell)=|u|_{\lb}+|\ell|_{\lr}
\]
for the total number of internal letters, and let $r(u,\ell)$ be the total number of runs of $\lb$ in $u$ and of $\lr$ in $\ell$.
For the two words $u(\delta)$ and $\ell(\delta)$ associated with a domino $\delta$, these are exactly
the statistics $i(\delta)$ and $r(\delta)$ defined in Subsection~\ref{subsec:overview}, and recorded by the generating function $D(z;x,y)$. Thus, to construct automata that can track $i(u,\ell)$ and $r(u,\ell)$, we need to know whether each internal letter that is read from the input tapes continues a run, or begins a new one.

For this purpose we refine the two-state automaton to one with
states of the form $(a,b,r)$, where $a \in\{\lb,N\}$, $b\in\BBB$ and $r\in\RRR$. The first coordinate is the same as the two-state automaton ($a=\lb$ if and only if the most recently written output letter was
$\lb$), while the second and third coordinates $b$ and $r$ record
the most recently written blue and red letters, respectively.

Not all eight triples $(a,b,r)$ are possible.  If $a=\lb$, then the most
recently-written blue letter must also be $\lb$, and hence the two
states of the form $(\lb,\lbo,r)$ are unreachable.  Let $\HHH$ denote the set comprising the other six states, thus
\[
 \HHH=
 \left\{
 (N,\lbo,\lro),(N,\lbo,\lr),
 (N,\lb,\lro),(N,\lb,\lr),
 (\lb,\lb,\lro),(\lb,\lb,\lr)
 \right\}.
\]
The transitions between states again correspond to the letter being copied from one of the input tapes to the output, and the resulting automaton is as follows.

{\centering
\begin{tikzpicture}
\node (4) at (0,-.75) {$(N,\lb,\lr)$};
\foreach \n/\lab [count=\i] in {3/{N,\lb,\lro},5/{\lb,\lb,\lro},1/{N,\lbo,\lro},2/{N,\lbo,\lr},6/{\lb,\lb,\lr}}
	\node (\n) at (72*\i-54:3.5) {($\lab$)};
\foreach \s/\t\lab in {1/2/\lr,1/5/\lb,2/1/\lro,2/6/\lb,3/5/\lb,3/4/\lr,4/3/\lro,5/1/\lbo,5/3/\lro,6/2/\lbo}
	\draw[thick] (\s) edge [->,bend left=10] node[inner sep=0.5pt,midway,auto] {\footnotesize $\lab$} (\t); 
\foreach \s/\t\lab in {3/1/\lbo,4/2/\lbo,4/6/\lb,6/3/\lro}
	\draw[thick] (\s) edge [->] node[inner sep=0.5pt,midway,auto] {\footnotesize $\lab$} (\t); 
\draw[thick] (1) edge[<-,loop,out=160,in=200,looseness=4] node[midway,left] {\footnotesize $\lbo,\lro$} (1);
\draw[thick] (2) edge[<-,loop,out=160,in=200,looseness=4] node[midway,left] {\footnotesize $\lbo,\lr$} (2);
\draw[thick] (3) edge[<-,loop,out=20,in=-20,looseness=4] node[midway,right] {\footnotesize $\lro$} (3);
\draw[thick] (4) edge[<-,loop,out=180,in=150,looseness=7] node[midway,above left] {\footnotesize $\lr$} (4);
\draw[thick] (5) edge[<-,loop,out=120,in=60,looseness=7] node[midway,above] {\footnotesize $\lb$} (5);
\draw[thick] (6) edge[<-,loop,out=20,in=-20,looseness=4] node[midway,right] {\footnotesize $\lb$} (6);
\end{tikzpicture}\par}

The starting state is $(N,\lbo,\lro)$, so that the second and third coordinates do not contribute to any counts of internal runs.

Note that the two states labelled $(\lb,\lb,r)$ only have 3 transitions to other states (one of which is a loop to the same state), in order to avoid creating the forbidden factor $\lb\lr$.  As before, an input tape can be
read whenever the corresponding transition (whose label equals the first unread letter on the tape) is available, and a run is
accepted when both input tapes have been exhausted. The accepted runs of this automaton with input words $u$ and $\ell$ are again in bijection with the elements of $\SSS(u,\ell)$. 

This automaton allows us to identify when internal runs are being continued and when new internal runs are being created. For example, when the letter $\lb$ is copied from the blue tape to the output at state $(a,b,r)$, it begins a new
internal run if and only if $b=\lbo$. Similarly, when $\lr$ is read from the red
tape, a new internal run begins precisely when $r=\lro$.

\subsection{Weighted transitions}
\label{sec:weighted-transitions}

In order to track the two statistics $i(u,\ell)$ and $r(u,\ell)$, we need to assign weights to the transitions of a shuffle automaton.  
Let $\AAA$ be a shuffle automaton with state set $Q$. A \emph{transition weighting} of $\AAA$ is a function $\tau$ from the set of transitions of $\AAA$ to $\mathbb{R}_{>0}$.
The \emph{weight} of a run is the product of the weights of its transitions, with the empty run having weight~$1$.

For a reachable ID $(q;u',\ell')$, let $F_q(u',\ell')$ denote the total weight of all accepted runs beginning at this ID.
Thus $F_q(\epsilon,\epsilon)=1$, and, whenever $|u'|+|\ell'|>0$,
\begin{equation}
\label{eq:weighted-shuffle-recurrence}
 F_q(u',\ell')
 =
 \sum_{e\in E(q;u',\ell')}
 \tau(e)F_{q_e}(u_e',\ell_e').
\end{equation}

Fix $\lambda>0$. Given an automaton with transition weighting $\tau$,  we say that a state weighting
$\kappa:Q\to\mathbb R_{>0}$ is \emph{$\lambda$-good} if
\begin{equation}
\label{eq:weighted-good}
 \sum_{e\in E(q;u',\ell')}
 \tau(e)\kappa(q_e)
 \leq
 \lambda\kappa(q)
\end{equation}
for every pair of input words $u$ and $\ell$, and for every reachable ID $(q;u',\ell')$ with $|u'|+|\ell'|>0$.  
Note that the earlier definition of $\lambda$-good for unweighted automata is recovered by taking $\tau(e)=1$ for every transition. 
We have the corresponding weighted version of Proposition~\ref{prop:unweighted-certificate-bound}. 

\begin{proposition}
\label{prop:weighted-certificate-bound}
Suppose that $\kappa$ is $\lambda$-good for a shuffle automaton $\AAA$ with finite state set $Q$ and transition weighting $\tau$.  Then, for every reachable ID $(q;u',\ell')$,
\[
 F_q(u',\ell')
 \leq
 \frac{\kappa_+}{\kappa_-}
 \lambda^{|u'|+|\ell'|},
\]
where $\kappa_-=\min_{q\in Q}\kappa(q)$ and $\kappa_+=\max_{q\in Q}\kappa(q)$.
\end{proposition}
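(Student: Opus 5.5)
The plan is to mimic the proof of Proposition~\ref{prop:unweighted-certificate-bound} exactly, replacing the unweighted recurrence~\eqref{eq:unweighted-shuffle-recurrence} by its weighted counterpart~\eqref{eq:weighted-shuffle-recurrence}. Rather than proving the stated bound directly, I will prove the stronger, state-dependent claim that for every reachable ID $(q;u',\ell')$,
\[
 F_q(u',\ell')\leq\frac{\kappa(q)}{\kappa_-}\lambda^{|u'|+|\ell'|},
\]
by induction on $|u'|+|\ell'|$. The proposition then follows immediately, since $\kappa(q)\leq\kappa_+$.

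For the base case $u'=\ell'=\epsilon$, we have $F_q(\epsilon,\epsilon)=1\leq\kappa(q)/\kappa_-$ by the definition of $\kappa_-$.

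For the inductive step, suppose $|u'|+|\ell'|>0$. Each active transition $e\in E(q;u',\ell')$ leads to an ID $(q_e;u_e',\ell_e')$. This ID is reachable, because it is reached from a reachable ID in one step. It also has total unread length $|u'|+|\ell'|-1$. So the induction hypothesis gives
\[
 F_{q_e}(u_e',\ell_e')\leq\frac{\kappa(q_e)}{\kappa_-}\lambda^{|u'|+|\ell'|-1}.
\]
Substituting into~\eqref{eq:weighted-shuffle-recurrence}, and using that the weights $\tau(e)$ are positive so the inequalities are preserved, yields
\[
 F_q(u',\ell')\leq\frac{\lambda^{|u'|+|\ell'|-1}}{\kappa_-}\sum_{e\in E(q;u',\ell')}\tau(e)\kappa(q_e).
\]
The $\lambda$-good condition~\eqref{eq:weighted-good} bounds the sum by $\lambda\kappa(q)$, which closes the induction.

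There is no real obstacle here. The only points needing care are bookkeeping. First, the successor IDs must be reachable, so that both the induction hypothesis and~\eqref{eq:weighted-good} apply to them. Second, the case where $E(q;u',\ell')$ is empty (a dead end with unread input) must be handled; there $F_q=0$, and the bound holds trivially.
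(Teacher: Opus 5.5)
Your proposal is correct and matches the paper's proof: the paper uses the same strengthened induction hypothesis $F_q(u',\ell')\leq\frac{\kappa(q)}{\kappa_-}\lambda^{|u'|+|\ell'|}$ and the same base case, and it combines the weighted recurrence with the $\lambda$-good inequality before finishing with $\kappa(q)\leq\kappa_+$. Your remarks on reachability of successor IDs and on dead-end IDs, where the sum is empty, are valid bookkeeping that the paper leaves implicit.
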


\begin{proof}
The proof is the same induction as in
Proposition~\ref{prop:unweighted-certificate-bound}.  If both input
tapes are exhausted, then $F_q(\epsilon,\epsilon)=1$, which is at most
$\kappa(q)/\kappa_-$.  Otherwise,
\eqref{eq:weighted-shuffle-recurrence}, \eqref{eq:weighted-good} and the induction hypothesis give
\begin{align*}
 F_q(u',\ell')
 &\leq
 \frac{\lambda^{|u'|+|\ell'|-1}}{\kappa_-}
 \sum_{e\in E(q;u',\ell')}
 \tau(e)\kappa(q_e)\\
 &\leq
 \frac{\kappa(q)}{\kappa_-}
 \lambda^{|u'|+|\ell'|}.
\end{align*}
The result follows since $\kappa(q)\leq\kappa_+$.
\end{proof}

It follows from Proposition~\ref{prop:weighted-certificate-bound} that the total weight of the accepted runs on any pair of input words having total length $n$ is at most $C\lambda^n$, where $C=\frac{\kappa_+}{\kappa_-}$.

Our choice of weighting is dictated by the form of the bound that we seek for the method described in Subsection~\ref{subsec:overview}, namely
\[ 
	s(u(\delta),\ell(\delta)) \le C\lambda^{|\delta|} x^{i(\delta)}y^{r(\delta)}.
\]  
To fit the form in the equation above, we need -- via Proposition~\ref{prop:weighted-certificate-bound} -- the quantity $F_{q^*}(u,\ell)$ (where $q^*$ denotes the starting state of the automaton) to equal $s(u,\ell)x^{-i(u,\ell)}y^{-r(u,\ell)}$. Thus, we require that each accepted run on input words $(u,\ell)$ has weight $x^{-i(u,\ell)}y^{-r(u,\ell)}$, for fixed $x,y>0$. We obtain the following.

\begin{corollary}
\label{cor:weighted-shuffle-bound}
Fix $x,y>0$, and let $\AAA$ be a weighted shuffle automaton such that, for every pair $(u,\ell)$, its accepted runs are in bijection with the admissible shuffles of $u$ and $\ell$, and every such run has weight $x^{-i(u,\ell)}y^{-r(u,\ell)}$.

If $\AAA$ admits a $\lambda$-good state weighting $\kappa$, then
\[
 s(u,\ell)
 \leq
 \frac{\kappa_+}{\kappa_-}
 \lambda^{|u|+|\ell|}
 x^{i(u,\ell)}y^{r(u,\ell)}
\]
for every $u\in\BBB^*$ and $\ell\in\RRR^*$.
\end{corollary}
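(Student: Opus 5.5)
The plan is to apply Proposition~\ref{prop:weighted-certificate-bound} at the initial ID of the automaton and then translate total weight into a count of shuffles. Let $q^*$ be the starting state of $\AAA$. Every run starting from $q^*$ on input $(u,\ell)$ is a sequence of transitions from the initial configuration, so the initial ID $(q^*;u,\ell)$ is reachable (trivially, by the empty sequence). Hence Proposition~\ref{prop:weighted-certificate-bound} applies with $u'=u$ and $\ell'=\ell$. It gives
\[
 F_{q^*}(u,\ell)\leq\frac{\kappa_+}{\kappa_-}\lambda^{|u|+|\ell|}.
\]

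Next I will compute $F_{q^*}(u,\ell)$ exactly using the two hypotheses on $\AAA$. By definition, $F_{q^*}(u,\ell)$ is the sum of the weights of all accepted runs from $(q^*;u,\ell)$. These runs are in bijection with $\SSS(u,\ell)$, so there are exactly $s(u,\ell)$ of them. Each one has the same weight $x^{-i(u,\ell)}y^{-r(u,\ell)}$, a quantity that depends only on the input pair and not on the particular run. Therefore
\[
 F_{q^*}(u,\ell)=s(u,\ell)\,x^{-i(u,\ell)}y^{-r(u,\ell)}.
\]
Substituting this into the bound and multiplying through by the positive quantity $x^{i(u,\ell)}y^{r(u,\ell)}$ gives the claimed inequality.

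There are two small points to check. First, if $u=\ell=\epsilon$, the condition $|u'|+|\ell'|>0$ is irrelevant: $F_{q^*}(\epsilon,\epsilon)=1$, $s=1$, $i=r=0$, and the bound holds because $\kappa_+/\kappa_-\geq1$. Second, the $\lambda$-goodness hypothesis quantifies over all input pairs and all reachable IDs, so it does not depend on the particular $(u,\ell)$ chosen. This is the only step needing care, and it is covered by the definition.
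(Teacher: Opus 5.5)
Your proof is correct and matches the paper's argument: identify the total weight of accepted runs from the initial ID as $s(u,\ell)x^{-i(u,\ell)}y^{-r(u,\ell)}$, bound it using Proposition~\ref{prop:weighted-certificate-bound}, and rearrange. Your side remarks on the reachability of the initial ID and on the empty-input case are correct but add nothing, since that proposition already covers them.
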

\begin{proof}
The total weight of the accepted runs on $(u,\ell)$ is $s(u,\ell)x^{-i(u,\ell)}y^{-r(u,\ell)}$.
By Proposition~\ref{prop:weighted-certificate-bound}, this is at most $\frac{\kappa_+}{\kappa_-}\lambda^{|u|+|\ell|}$.
Rearranging gives the result.
\end{proof}
The analysis in Subsection~\ref{subsec:overview} now gives us the following general result. Recall that 
\[
 d_n(x,y) = [z^n]D(z;x,y) = \sum_{\delta\in\DDD_n}  x^{i(\delta)}y^{r(\delta)} ,
\]
and for any positive $x$ and $y$,
\[
 \mu(x,y) = \limsup_{n\to\infty}d_n(x,y)^{1/n}.
\]
Thus, the quantity $\mu(x,y)$ is the reciprocal of the radius of convergence of $D(z;x,y)$.

\begin{proposition}
\label{prop:weighted-transfer}
Fix $x,y>0$.  If there exists a weighted shuffle automaton satisfying
the hypotheses of Corollary~\ref{cor:weighted-shuffle-bound} and
admitting a $\lambda$-good state weighting, then
\[
 \gr(\av(1324))
 \leq
 \lambda\mu(x,y).
\]
\end{proposition}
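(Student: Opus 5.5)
The plan is to chain Corollary~\ref{cor:weighted-shuffle-bound} with Proposition~\ref{prop-1324-leq-shuffles}, and then use the definition of $\mu(x,y)$ to pass to growth rates.

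Fix $n$ and a domino $\delta\in\DDD_n$, and set $u=u(\delta)$ and $\ell=\ell(\delta)$. Since every point of $\delta$ lies in exactly one of its two cells, $|u|+|\ell|=n$. By the remark following the definitions of $i(u,\ell)$ and $r(u,\ell)$, we have $i(u,\ell)=i(\delta)$ and $r(u,\ell)=r(\delta)$. Corollary~\ref{cor:weighted-shuffle-bound} then gives
\[
 s(u(\delta),\ell(\delta))\leq C\lambda^{n}x^{i(\delta)}y^{r(\delta)},
 \qquad\text{where } C=\frac{\kappa_+}{\kappa_-}.
\]
Because $Q$ is finite, $C$ is a finite positive constant depending only on $\kappa$. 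In particular, it does not depend on $\delta$ or on $n$.

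Summing this bound over $\delta\in\DDD_n$ and applying Proposition~\ref{prop-1324-leq-shuffles} yields
\[
 |\av_n(1324)|\leq C\lambda^n d_n(x,y).
\]
Take $n$th roots and then the limsup. Since $C^{1/n}\to1$, this gives
\[
 \gr(\av(1324))=\limsup_{n\to\infty}|\av_n(1324)|^{1/n}\leq\lambda\limsup_{n\to\infty}d_n(x,y)^{1/n}=\lambda\mu(x,y).
\]
By Theorem~\ref{thm:weighted-domino-growth}, $\mu(x,y)$ is finite, namely $1/\rho(x,y)$.

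I do not expect a genuine obstacle. The only step needing care is to confirm that the automaton's statistics $i(u,\ell)$ and $r(u,\ell)$ agree exactly with the domino statistics. They do: $u(\delta)$ and $\ell(\delta)$ are read bottom to top within each cell, and the runs of $\lb$ and $\lr$ are counted within each word separately, exactly as in the definition of $r(\delta)$.
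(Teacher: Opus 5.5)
Your proposal is correct and follows essentially the same route as the paper's proof. You apply Corollary~\ref{cor:weighted-shuffle-bound} to each domino, sum using Proposition~\ref{prop-1324-leq-shuffles} to get $|\av_n(1324)|\leq\frac{\kappa_+}{\kappa_-}\lambda^n d_n(x,y)$, and take $n$th roots and the lim sup; citing Theorem~\ref{thm:weighted-domino-growth} for finiteness of $\mu(x,y)$ is harmless but not needed for the inequality.
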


\begin{proof}
For each $\delta\in\DDD_n$,
Corollary~\ref{cor:weighted-shuffle-bound} gives
\[
 s\bigl(u(\delta),\ell(\delta)\bigr)
 \leq
 \frac{\kappa_+}{\kappa_-}
 \lambda^n x^{i(\delta)}y^{r(\delta)}.
\]
Summing over all $n$-point dominoes and using
Proposition~\ref{prop-1324-leq-shuffles}, we obtain
\[
 |\av_n(1324)|
 \leq
 \frac{\kappa_+}{\kappa_-}\lambda^n d_n(x,y).
\]
The result follows on taking $n$th roots and the lim sup as $n\to\infty$.
\end{proof}


We conclude this section by briefly revisiting the six-state automaton from Section~\ref{sec:tracking-internal-runs}. A transition weighting that satisfies the hypotheses of Corollary~\ref{cor:weighted-shuffle-bound} is as follows. For a transition $e$ from $(a,b,r)$, set
\[
\tau(e) = \begin{cases}
1&\text{if $e$ reads $\lbo$ or $\lro$.}\\
x^{-1}&\text{if $e$ reads $\lb$ and $b=\lb$; or $e$ reads $\lr$ and $r=\lr$}\\
(xy)^{-1}&\text{otherwise.}
\end{cases}
\]
This weighting ensures that every internal letter contributes a factor $x^{-1}$ to the weight of the run,
and the first letter of each internal run contributes an additional factor $y^{-1}$. It is straightforward to verify that this weighting ensures every accepted run with input $(u,\ell)$ has the required weight $x^{-i(u,\ell)}y^{-r(u,\ell)}$. 

Numerical optimisation of this six-state system gives no improvement on the existing upper bound for $\gr(\av(1324))$: the optimum found is at
\[
x=y=1,\qquad \lambda=2,
\]
which returns the bound $13.5$.

The six-state automaton accepts exactly the same $\lb\lr$-avoiding shuffles as the two-state automaton; the additional information recorded by the extra states serves only to distinguish whether an internal letter begins or
continues a run, and hence to implement the weights above.  
To obtain a genuine improvement, we next build larger automata to use information about future letters on the two input tapes.

\section{Lookahead automata}
\label{sec:lookahead}

In this section we combine the three coordinates $(a,b,r)$ from the six-state automaton introduced earlier with a \emph{lookahead}: that is, the automaton stores a fixed number of unread letters from the blue and red tapes.  A $k$-lookahead automaton has state set
\[
 Q_k= \HHH \times\BBB^k\times\RRR^k.
\]
(Recall that $\HHH$ denotes the six states from the earlier automaton.) Thus $|Q_k|=6\cdot 2^{2k}$. We write its states as
\[
 (a,b,r;U,L),
\]
where $U=u_1\cdots u_k$ records the next $k$ blue letters and $L=\ell_1\cdots \ell_k$ records the next $k$ red letters. (If fewer than $k$ unread letters remain on a tape, the lookahead is padded with $\lbo$ on the blue tape and $\lro$ on the red tape.) 
Throughout, a letter is regarded as \emph{unread} until it is copied to the output. In other words, the `lookahead' facility provided by the words $U$ and $L$ merely provides information as to what the next \(k\) letters of each suffix are.

As with the six-state automaton, each transition in the $k$-lookahead automaton represents a letter being copied from one of the input tapes to the output tape. At the same time, the word $U$ or $L$ (if the state at the start of the transition is $(a,b,r;U,L)$, say) must be updated to remove the first letter from the appropriate input tape, and include the next.  Thus, if the current blue letter $u_1$ is written to the output, then the word $U$ is updated to include the next symbol (say $u_{k+1}$) from the input tape, and removing $u_1$, i.e. we have the transition
\begin{equation}
\label{eq:k-blue-transition}
 (a,b,r;u_1u_2\cdots u_k,L)
 \longmapsto
 (a',u_1,r;u_2\cdots u_ku_{k+1},L),
\end{equation}
where $a'=\lb$ if and only if $u_1=\lb$. We use the term \emph{reveal} for the process of adding the next letter $u_{k+1}$ into the lookahead state, to distinguish from the reading/writing function of the automaton.

Similarly, writing the current red letter $\ell_1$ corresponds to the transition
\begin{equation}
\label{eq:k-red-transition}
 (a,b,r;U,\ell_1\ell_2\cdots \ell_k)
 \longmapsto
 (N,b,\ell_1;U,\ell_2\cdots \ell_k\ell_{k+1}),
\end{equation}
unless $a=\lb$ and $\ell_1=\lr$, in which case the first red letter cannot be read, to avoid forming a $\lb\lr$-factor. It is worth noting that the state $(a,b,r;U,L)$ has two transitions labelled $u_1$, according to the two possible values of the newly-revealed letter $u_{k+1}$. At any particular reachable ID, exactly one of these two transitions is active. Similarly (providing $a=N$ or $\ell_1=\lro$) there are two transitions labelled by $\ell_1$, with again exactly one active at any reachable ID. Thus, at any reachable ID the only choice is whether to write the next red or the next blue letter. 

Once either input tape is exhausted, no transition associated with that tape is active, so padding symbols are never copied to the output. 
A run is accepted precisely when both input tapes are exhausted. 

The initial state has $a=N$, $b=\lbo$ and $r=\lro$, and $U$ and $L$ contain the first $k$ letters of the two input tapes (padded by $\lbo$ and $\lro$, as required). 

For an edge $e$ emanating from a state $(a,b,r;U,L)$, the weighting $\tau$ follows that of the six-state automaton, namely 
\[
\tau(e) = \begin{cases}
1&\text{if $e$ writes $\lbo$ or $\lro$.}\\
x^{-1}&\text{if $e$ writes $\lb$ and $b=\lb$; or $e$ writes $\lr$ and $r=\lr$}\\
(xy)^{-1}&\text{otherwise.}
\end{cases}
\]

For fixed $k$, the process to check whether a given state weighting $\kappa$ is $\lambda$-good reduces to checking equation~\eqref{eq:weighted-good} for every state in $Q_k$ and
every possible pair in $\BBB\times\RRR$. Note that at the ends of the tapes, one or both transitions are simply omitted, and these cases do not require separate checking. Thus there are at most $4|Q_k|$ inequalities to check.

For fixed input words, `forgetting' the lookahead gives a bijection between accepted runs of this automaton and of the six-state automaton. Corresponding runs have the same transition weights, so the hypotheses of Corollary~\ref{cor:weighted-shuffle-bound} remain satisfied.

\subsection{One-step lookahead}
\label{sec:one-step-lookahead}

As a demonstration of the method, and our first improvement on the upper bound for $\gr(\av(1324))$, we consider the $1$-lookahead automaton.
  

The one-step automaton has states
\[
 Q_1= \HHH \times\BBB\times\RRR.
\]
We write these in the form $(a,b,r;u,\ell)$, where $(a,b,r)\in\mathcal H$, and $u,\ell$ are the first letters on the blue and red input tapes that have yet to be written to the output.  Thus $Q_1$ comprises $6\cdot2\cdot2=24$ states.

Suppose that the next letters on the blue and red tapes after $u$ and $\ell$ are
$u'\in\{\lbo,\lb\}$ and $\ell'\in\{\lro,\lr\}$, respectively. (We will refer to these as the \emph{second letters}.) For completeness, we describe the transitions explicitly for this automaton. Writing the current blue letter gives the transition
\begin{equation}
\label{eq:one-step-blue-transition}
 (a,b,r;u,\ell)\longmapsto(a',u,r;u',\ell),
\end{equation}
where $a'=N$ if and only if $u=\lbo$. If $a=N$ or $\ell=\lro$, then writing the current red letter gives
\begin{equation}
\label{eq:one-step-red-transition}
 (a,b,r;u,\ell)\longmapsto(N,b,\ell;u,\ell').
\end{equation}
If $a\neq N$ and $\ell\neq\lro$ there is no red transition, to avoid creating a $\lb\lr$-factor. The transition weights $\tau$ are as described in the general case.

We now identify a state weighting $\kappa$. This family was found by ChatGPT by first solving the finite system of $\lambda$-good inequalities numerically, identifying the inequalities which are tight at the optimum, and then solving those equalities symbolically. Remarkably, the resulting weighting can be defined using a single parameter $x$.

For any $x \in [1/\sqrt2,1]$, let
\begin{equation}
\label{eq:one-step-y-lambda}
 y=\frac{2x}{x+1},
 \qquad
 \lambda=1+\frac{1}{x}.
\end{equation}
Note that the choice of $\lambda=1+\frac1x$ cannot be made smaller, because of the inequality $\lambda\ge 1+1/x$ which arises from the state $q=(N,\lbo,\lr; \lbo,\lr)$: this state has two loops, corresponding to the cases when the first two remaining letters on the two input tapes are $\lbo\lbo$ and $\lr\lr$. The transition that writes $\lbo$ has weight 1, while the transition $\lr$ has weight $1/x \ge 1$. Thus, for any state weighting $\kappa$, the inequality associated with any reachable ID at $q$ whose remaining input suffixes begin $\lbo\lbo$ and $\lr\lr$ is
\[
\left(1+\frac1x\right)\kappa(q) \leq \lambda\kappa(q).
\]
This should be compared with the 2-state automaton example shown earlier, in which we established that $\lambda=2$. In a sense, the increase in value of $\lambda$ in this example is the `cost' for selecting a value of $x$ that is less than 1. 

For convenience, define the following indicator functions
\[
 \epsilon_{\lb}(b)=
 \begin{cases}
  0,&b=\lbo,\\
  1,&b=\lb,
 \end{cases}
 \qquad
 \epsilon_{\lr}(r)=
 \begin{cases}
  0,&r=\lro,\\
  1,&r=\lr.
 \end{cases}
\]
Also put
\[
 d_0=\frac{2x+1}{2x(x+1)},
 \qquad
 d_1=\frac{1}{2(x+1)}.
\]

Define a positive weighting $\kappa$ of each state $(a,b,r;u,\ell)$ by
\begin{equation}
\label{eq:one-step-state-weighting}
\begin{array}{c|cc}
 (u,\ell)&a=N&a=\lb\\ \hline
 (\lbo,\lro)
   &y&y\\[1mm]
 (\lb,\lro)
   &y^{\epsilon_{\lb}(b)}
   &y^{\epsilon_{\lb}(b)}\\[1mm]
 (\lbo,\lr)
   &y^{\epsilon_{\lr}(r)}
   &\dfrac12y^{\epsilon_{\lr}(r)+1}\\[3mm]
 (\lb,\lr)
   &d_0y^{\epsilon_{\lb}(b)+\epsilon_{\lr}(r)}
   &d_1y^{\epsilon_{\lb}(b)+\epsilon_{\lr}(r)}.
\end{array}
\end{equation}
Note that whenever $a=\lb$, then $b=\lb$, so $\epsilon_{\lb}(b)=1$ wherever it appears in the $a=\lb$ column.

\begin{proposition}
\label{prop:one-step-certificate}
For $x\in[1/\sqrt2,1]$ and $y=2x/(x+1)$, the state weighting in
\eqref{eq:one-step-state-weighting} is $\left((x+1)/x\right)$-good for the one-step
shuffle automaton.
\end{proposition}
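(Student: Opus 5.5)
The plan is to verify inequality~\eqref{eq:weighted-good} directly, state by state, for $\lambda=1+1/x$. At a reachable ID with state $(a,b,r;u,\ell)$, at most two transitions are active. One is the blue transition~\eqref{eq:one-step-blue-transition}, whose target depends on the second blue letter $u'$. The other is the red transition~\eqref{eq:one-step-red-transition}, whose target depends on $\ell'$, and which is present only when $a=N$ or $\ell=\lro$. At tape ends one transition is omitted, and since all weights are positive, each such inequality is implied by the full two-transition one. (A nonempty blue tape has only a blue transition, so it is dominated.) It therefore suffices to check, for each of the $24$ states and each $(u',\ell')\in\BBB\times\RRR$,
\[
 \tau(e_{\mathrm b})\kappa(a',u,r;u',\ell)+\tau(e_{\mathrm r})\kappa(N,b,\ell;u,\ell')\leq\Bigl(1+\tfrac1x\Bigr)\kappa(a,b,r;u,\ell).
\]
Each left-hand term is maximised by the worse choice of $u'$ (respectively $\ell'$), so only the maximising second letters need to be considered.

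To organise the cases, I will split on $(u,\ell)$, matching the four rows of~\eqref{eq:one-step-state-weighting}, and then on $a$. The dependence on $b$ and $r$ enters only through the factors $y^{\epsilon}$ and through the weights $\tau$: a fresh internal letter costs $(xy)^{-1}$ and a continued one costs $x^{-1}$. These two effects are designed to cancel. For example, writing $\lb$ from a state with $b=\lbo$ costs an extra $y^{-1}$, but the target now has $b=\lb$, which gains a factor $y$. The same cancellation occurs on the red side. After dividing through by the common power $y^{\epsilon_{\lb}(b)+\epsilon_{\lr}(r)}$, each case should reduce to a rational inequality in $x$ alone, once we substitute $y=2x/(x+1)$ together with the values of $d_0$ and $d_1$.

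I will then list these reduced inequalities. I expect many of them to be identities, namely the tight ones from which $d_0$, $d_1$ and the factor $\tfrac12$ were solved. An example is the loop state $(N,\lbo,\lr;\lbo,\lr)$, which gives exactly $1+1/x$. The remaining inequalities should become polynomial inequalities such as $2x^2\geq1$ or $x\leq1$, and these are exactly where the hypothesis $x\in[1/\sqrt2,1]$ enters. The state $a=\lb$ with $\ell=\lr$ is special: it has only a blue transition, and its weight $d_1$ or $\tfrac12 y$ is small, so its inequality must be checked against the target weights with care.

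The main obstacle is bookkeeping rather than any single idea. There are roughly $24\times4$ inequalities, and the row $(\lb,\lr)$ needs particular attention, since its targets involve $d_0$ and $d_1$ and the choice between $a=N$ and $a=\lb$ changes which transitions are available. I would first tabulate the targets symbolically. Then, for each case, I would compute the difference $\lambda\kappa(q)-\sum\tau\kappa$ as a rational function of $x$, and factor its numerator to show that it is nonnegative on $[1/\sqrt2,1]$. Endpoint checks and sign analysis of a few low-degree polynomials should then suffice.
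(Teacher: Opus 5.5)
Your plan is correct and is essentially the paper's own proof. The paper substitutes the blue and red transitions into \eqref{eq:weighted-good} for each of the $24$ states and $4$ pairs $(u',\ell')$, notes that omitted transitions at tape ends only weaken the inequality, and checks that the resulting slacks ($0$, $1$, $(1-x)/x$, $(1-x)/(2x)$, $(2x^2-1)/(2x(x+1))$, $(2x^2-1)/(2x^2)$, $x/(2x+1)$, $2x/(2x+1)$) are nonnegative on $[1/\sqrt2,1]$; your two shortcuts are valid, namely that the blue and red terms depend separately on $u'$ and $\ell'$, and that the $b,r$-dependence cancels so only the eight cases indexed by $(a,u,\ell)$ remain, each of which turns out to be tight at its worst-case second letters.
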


\begin{proof}
For each of the $24$ states and each of the four possible pairs
$(u',\ell')$ of second letters, substitute
\eqref{eq:one-step-blue-transition} and
\eqref{eq:one-step-red-transition} into the inequality~\eqref{eq:weighted-good}. This produces 96 inequalities, detailed in Appendix~\ref{app:one-step-inequalities}.
After dividing by the weight of the current state, these inequalities reduce to the nonnegativity of the following eight quantities:
\[
 0,\qquad
 1,\qquad
 \frac{1-x}{x},\qquad
 \frac{1-x}{2x},\qquad
 \frac{2x^2-1}{2x(x+1)},\qquad
 \frac{2x^2-1}{2x^2},\qquad
 \frac{x}{2x+1},\qquad
 \frac{2x}{2x+1}.
\]
Note that these are all nonnegative for $x\in[1/\sqrt2,1]$.
At the end of either tape the corresponding transition is omitted, so
the same inequalities remain valid.
\end{proof}

It follows from Proposition~\ref{prop:weighted-transfer} that
\begin{equation}
\label{eq:one-step-family-bound}
 \gr(\av(1324))
 \leq
 \frac{x+1}{x}\,
 \mu\left(x,\frac{2x}{x+1}\right)
\end{equation}
for every $x\in[1/\sqrt2,1]$.

We now identify the optimal value of $x$. For any fixed $x$, by Theorem~\ref{thm:weighted-domino-growth}, $\mu\left(x,\frac{2x}{x+1}\right)$ is equal to the reciprocal of the unique zero of $\Phi\left(z;x,\frac{2x}{x+1}\right)$ in the interval $(0,\frac12)$. Write $\rho(x)$ for this zero, so the right-hand side of
\eqref{eq:one-step-family-bound} is
\[
 f(x)=\frac{x+1}{x\rho(x)}.
\]

Substituting $y=2x/(x+1)$ into $\Phi$ yields an expression
\[ 
\Phi\left(z;x,\frac{2x}{x+1}\right) = \frac{2}{(x+1)^2}P(x,z),
\]
where $P(x,z)$ is a polynomial, which necessarily satisfies $P(x,\rho(x))=0$.

Since $y=2x/(x+1)\neq x$ for $1/\sqrt2<x<1$, the discriminant calculation in the proof of Theorem~\ref{thm:weighted-domino-growth} shows that the relevant zero $\rho(x)$ is simple, and hence implicit differentiation is valid. 
Differentiating this equation implicitly, and imposing the stationary condition $f'(x)=0$, gives a second polynomial relation between $x$ and $\rho(x)$.  
Eliminating $\rho(x)$ from these two equations shows
that every stationary point in the interior of
$[1/\sqrt2,1]$ is a zero of
\[
 x^7-x^6-6x^5+6x^4+12x^3+12x^2-8x-8.
\]
This polynomial is strictly increasing on $[1/\sqrt2,1]$ and takes opposite signs at the endpoints, so it has a unique zero in this interval, namely $x_0\approx0.803998$. At this value,
\[
 y_0=\frac{2x_0}{x_0+1}
 \approx 0.891351,
\qquad\text{and}\qquad
 \lambda_0=\frac{x_0+1}{x_0}
           \approx2.243784.
\]
The corresponding radius of convergence and weighted domino growth are $\rho(x_0)\approx 0.167564$ and $\mu(x_0,y_0)\approx5.967858$.

Furthermore, since $x_0$ is the only possible stationary point in the interior of
the interval, it remains only to compare the endpoints.  We have
\[
 f(x_0)=13.3905851029\ldots,\qquad
 f(1/\sqrt2)=13.4291650750\ldots,\qquad
 f(1)=\frac{27}{2}=13.5.
\]
Thus $f$ attains its minimum on $[1/\sqrt2,1]$ at $x=x_0$.

\begin{theorem}
\label{thm:one-step-bound}
The growth rate of the $1324$-avoiding permutations satisfies
\[
 \gr(\av(1324))
 \leq
f(x_0)=13.3905851029\dots
\leq
 13.390586.
\]
\end{theorem}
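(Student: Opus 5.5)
The theorem follows from the inequality~\eqref{eq:one-step-family-bound}, evaluated at $x=x_0$. Proposition~\ref{prop:one-step-certificate} shows that for every $x\in[1/\sqrt2,1]$ the weighting~\eqref{eq:one-step-state-weighting} is $\lambda$-good for the one-step automaton, with $\lambda=(x+1)/x$ and $y=2x/(x+1)$. The one-step automaton satisfies the hypotheses of Corollary~\ref{cor:weighted-shuffle-bound}, because forgetting the lookahead gives a weight-preserving bijection with the runs of the six-state automaton. Proposition~\ref{prop:weighted-transfer} then gives $\gr(\av(1324))\le f(x)$ for every such $x$. Strictly, we only need $x_0\in[1/\sqrt2,1]$ and the value of $f(x_0)$. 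The stationary-point analysis explains why $x_0$ is optimal within this family, but it is not logically required for the upper bound.

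The steps, in order, are as follows.

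\begin{enumerate}
\item Certify $x_0$. Define it as the unique root in $[1/\sqrt2,1]$ of $p(x)=x^7-x^6-6x^5+6x^4+12x^3+12x^2-8x-8$. We have $p(1/\sqrt2)<0<p(1)$, since $p(1)=8$. Rational bracketing, for instance $0.803998\le x_0\le0.803999$, locates the root well inside the interval.

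\item Identify $\rho(x_0)$. By Theorem~\ref{thm:weighted-domino-growth}, $\mu(x_0,y_0)=1/\rho(x_0)$, where $\rho(x_0)$ is the unique zero in $(0,\tfrac12)$ of the cubic $z\mapsto\Phi(z;x_0,y_0)$.

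\item Bound $f(x_0)=\lambda_0\mu(x_0,y_0)$ from above. Rather than computing $\rho$ exactly, find a rational $z_1$ slightly below the true zero, near $0.167564$, with $\Phi(z_1;x,2x/(x+1))<0$. Note that $\Phi(0)=-4<0$, and there is a single sign change on $(0,\tfrac12)$, since $\Phi(\tfrac12)>0$. So $\Phi(z_1)<0$ forces $z_1<\rho$, and hence $\mu<1/z_1$.
\end{enumerate}

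The main obstacle is that $x_0$ is only known approximately, so step 3 must hold uniformly over the bracketing interval for $x_0$. To handle this, use monotonicity or interval arithmetic. For $x$ in the tiny interval, $\Phi(z_1;x,2x/(x+1))$ is a rational function of $x$, so its negativity can be checked rigorously. Likewise, $(x+1)/x$ is decreasing in $x$, so it is bounded above by its value at the lower endpoint.

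Multiplying the two bounds gives $f(x_0)\le (1+1/x_{\rm lo})/z_1$. With a fine enough bracket this is at most $13.390586$. The headroom over $13.3905851029\ldots$ is about $10^{-6}$, so the bracket must be correspondingly tight, with $z_1$ accurate to roughly $10^{-8}$ relative error. The displayed value $13.3905851029\ldots$ itself is obtained by high-precision numerical evaluation of $\rho(x_0)$ from the cubic.
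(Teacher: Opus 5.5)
Your argument is essentially the paper's: the family bound $\gr(\av(1324))\le f(x)$ from Propositions~\ref{prop:one-step-certificate} and~\ref{prop:weighted-transfer}, evaluated at $x_0$ through the unique zero of $\Phi$ given by Theorem~\ref{thm:weighted-domino-growth}. The paper additionally shows that $x_0$ minimises $f$, which, as you say, the bound does not need, while you add a rigorous rational certificate for the final numerical inequality. One quantitative caution: although $f=\lambda/\rho$ is stationary at $x_0$, the factors $\lambda$ and $\rho$ each vary to first order in $x$ (relative rate $1/(x_0(x_0+1))\approx0.69$), so bounding them separately over a bracket of width $\Delta x$ costs about $9\,\Delta x$ in $f$. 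Your sample bracket $[0.803998,0.803999]$ would therefore overshoot the headroom of about $9\times10^{-7}$ roughly tenfold, and you need a bracket of width a few times $10^{-8}$ (e.g.\ $[0.803998,0.80399802]$, which contains $x_0\approx0.80399801$).
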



Note that the unrounded bound $f(x_0)$ in Theorem~\ref{thm:one-step-bound} is an algebraic number: it is a
real root of
\[
 4z^7-20z^6-343z^5-1392z^4-1076z^3
 +7512z^2+12600z+6272=0.
\]

%
%
%
%
%
%
%
%
%
%
%
%

\subsection{Three-step lookahead}
\label{sec:three-step-lookahead}

For our final upper bound, we consider the three-step lookahead
automaton. Its state set is
\[
 Q_3=\HHH\times\BBB^3\times\RRR^3,
\]
and hence contains $6\cdot8^2=384$ states. The transition rules and transition weights are exactly as described in the general case.

For each state, the four possible pairs $(u_4,\ell_4)\in\BBB\times\RRR$
of newly revealed letters determine the local inequalities, so there
are $384\cdot4=1536$ inequalities to check.

Guided again by numerical optimisation conducted by ChatGPT, we choose the following parameters. Let $t$ be the unique zero\footnote{A direct calculation gives
\[
m(1158423/10^6)<0<m(1158424/10^6),
\]
while $m'(T)>0$ throughout this interval; hence the zero of $m$ in this interval is unique.} in
$(1158423/10^6,1158424/10^6)$ of
\begin{equation}
\label{eq:three-step-t}
 m(T)=3T^5+10T^4-T^3-11T^2-6T-1,
\end{equation}
and put
\begin{equation}
\label{eq:three-step-parameters}
 x=y=t^{-1},
 \qquad
 \lambda=1+t.
\end{equation}
Numerically, $t\approx1.1584233$, $x=y\approx0.863242$, and
$\lambda\approx2.158423$. The transition weights are therefore
$1$, $t$, and $t^2$, depending on whether the letter written is external,
continues an internal run, or begins an internal run, respectively. As in the
one-step case, states with two loops force
$\lambda\geq1+1/x=1+t$.

The complete list of state weights (found by ChatGPT using numerical optimisation, followed by algebraic reconstruction) is given in Appendix~\ref{app:three-step-state-weighting}. 
We express each state weighting using polynomials $K_q(T)\in\mathbb Q[T]$ of degree at most four,\footnote{The degree bound here is a consequence of the fact that $t$ satisfies $m(t)=0$ where $m$ is a quintic polynomial.} and then $\kappa(q) = K_q(t)$. Only 139 distinct polynomials occur among the 384 weights.

\begin{proposition}
\label{prop:three-step-certificate}
For the parameters in~\eqref{eq:three-step-parameters}, the state
weighting $\kappa$ given in
Appendix~\ref{app:three-step-state-weighting} is $(1+t)$-good for
the three-step shuffle automaton.
\end{proposition}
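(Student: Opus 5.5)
The plan is to verify the finitely many inequalities~\eqref{eq:weighted-good} directly, exactly as in the proof of Proposition~\ref{prop:one-step-certificate}, but with exact arithmetic in the number field $\mathbb{Q}(t)$. By the discussion preceding Section~\ref{sec:one-step-lookahead}, it suffices to check, for each state $q=(a,b,r;U,L)\in Q_3$ and each pair $(u_4,\ell_4)\in\BBB\times\RRR$ of newly revealed letters, the single inequality
\[
 \tau(e_{\mathrm b})\,\kappa(q_{\mathrm b})+\tau(e_{\mathrm r})\,\kappa(q_{\mathrm r})\leq(1+t)\,\kappa(q).
\]
Here $q_{\mathrm b}$ and $q_{\mathrm r}$ are given by~\eqref{eq:k-blue-transition} and~\eqref{eq:k-red-transition}. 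The red term is omitted when $a=\lb$ and $\ell_1=\lr$. Each weight $\tau\in\{1,t,t^2\}$ is determined by the letter written and by the coordinate $b$ or $r$. Inequalities in which one transition is absent (at the end of a tape) are implied by these, since all terms are positive. This gives the $1536$ inequalities.

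For each inequality I would form the difference
\[
 G(T)=(1+T)K_q(T)-\tau_{\mathrm b}(T)K_{q_{\mathrm b}}(T)-\tau_{\mathrm r}(T)K_{q_{\mathrm r}}(T)\in\mathbb{Q}[T].
\]
I would then reduce $G$ modulo $m(T)$ to a polynomial $\bar G$ of degree at most four, so that $G(t)=\bar G(t)$. It remains to show $\bar G(t)\geq0$. If $\bar G$ is identically zero, the inequality is tight, and these are the equalities from which the weighting was reconstructed. Otherwise I would certify the sign using the isolating interval $I=[1158423/10^6,1158424/10^6]$ for $t$. The first tool is interval arithmetic: evaluate $\bar G$ on $I$ with rational endpoints and check that the lower bound is positive. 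Where this margin is too small, I would use a Sturm sequence or exact root isolation of $\bar G$ to show it has no root in $I$, together with its sign at one endpoint. I would also check positivity of every $K_q(t)$ the same way, so that $\kappa$ is a genuine state weighting.

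The main obstacle is near-tight inequalities. Because the weighting came from numerical optimisation, many inequalities are likely tight or almost tight, and a naive floating-point check cannot tell $\bar G(t)=0$ from a tiny negative value. This is why all arithmetic must be exact, through reduction modulo the minimal polynomial $m$. Since $m$ is irreducible (which I would check, or else reduce modulo the relevant factor), a nonzero $\bar G$ of degree at most four cannot vanish at $t$. Its sign is then decided by refining $I$ with bisection on $m$ until $\bar G$ has constant sign there. The bookkeeping of $1536$ cases (state updates, padding conventions, transition weights) is error-prone, so the check should be mechanised. I would carry it out as a formal computation (as in the Lean repository), enumerating states and transitions directly from the definitions rather than from a hand-written table.
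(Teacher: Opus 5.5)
Your proposal is correct and follows essentially the same route as the paper. For each of the $1536$ (state, newly revealed pair) cases, you form the slack polynomial $(1+T)K_q(T)-\sum_e T^{d(e)}K_{q_e}(T)$ and reduce it modulo $m(T)$. You then certify that the remainder is nonnegative at $t$ using the rational interval $(1158423/10^6,1158424/10^6)$, check positivity of the $139$ weight polynomials the same way, and dismiss the end-of-tape cases because dropping a positive term only helps.

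The only difference is that in the paper the crude bound (positive coefficients evaluated at the left endpoint, negative ones at the right) already suffices for every nonzero remainder. So your Sturm/bisection fallback and your appeal to the irreducibility of $m$ are never needed.
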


\begin{proof}

For each state $q$, let $K_q(T)\in\mathbb Q[T]$ be the
polynomial of degree at most four specified in
Appendix~\ref{app:three-step-state-weighting}, and define
\[
 \kappa(q)=K_q(t).
\]
For a state $q$ and a pair $(u_4,\ell_4)$ of newly-revealed
letters, form the polynomial
\[
 S_{q,u_4,\ell_4}(T) = (1+T)K_q(T) - \sum_e T^{d(e)}K_{q_e}(T),
\]
where $d(e)\in\{0,1,2\}$ is such that the corresponding transition weight is $1,t,t^2$, respectively. 
The evaluation of $S_{q,u_4,\ell_4}$ at $t$ is the \emph{slack} obtained by rearranging the inequality~\eqref{eq:weighted-good} for any reachable ID at state $q$ whose newly-revealed letters are $u_4$ and $\ell_4$. We must show that $S_{q,u_4,\ell_4}(t)\geq 0$.

Let $R_{q,u_4,\ell_4}(T)$ be the remainder in $\mathbb{Q}[T]$ when $S_{q,u_4,\ell_4}(T)$ is divided by $m(T)$. Thus $R_{q,u_4,\ell_4}(T)$ has degree at most 4. Since $m(t)=0$,
\[
 S_{q,u_4,\ell_4}(t)
 =
 R_{q,u_4,\ell_4}(t),
\]
and so we must verify that all 1536 numbers $R_{q,u_4,\ell_4}(t)$ are non-negative. To avoid issues using the irrational number $t$, we consider the values of the remainders across a small interval containing $t$ whose endpoints are rational. Put
$a=1158423/10^6$ and $b=1158424/10^6$. For each nonzero remainder
$R_{q,u_4,\ell_4}(T)=\sum_{j=0}^4r_jT^j$, we have
\[
 R_{q,u_4,\ell_4}(t)\geq
 \sum_{r_j\geq0}r_j a^j+
 \sum_{r_j<0}r_j b^j,
\]
and so it suffices to show that
$ \sum_{r_j\geq0}r_j a^j+
 \sum_{r_j<0}r_j b^j \geq 0$.

All of the 1536 remainders are nonnegative. (In fact, 514 are the zero polynomial, and 1022 are strictly positive, but we do not require this information for the proof.) 
When one of the input tapes no longer has any symbols, the corresponding transition from the ID is omitted, and the inequalities remain valid. Hence the weighting $\kappa$ is $\lambda$-good, when $\lambda=1+t$.

To show that the weighting $\kappa$ is itself strictly positive on each state, we use the same approach as taken for evaluating the slacks. Note that since the state weightings use only 139 polynomials, it suffices to verify that each of these polynomial is positive at $T=t$. This approach gives a strictly positive lower bound in every case. Hence \(\kappa(q)>0\) for all \(q\).
\end{proof}

That the 384 state weights are positive and the 1536 local inequalities hold
has been formally verified in Lean~4 using \texttt{mathlib}. The formalisation is available in the accompanying
repository: \url{https://github.com/rbrignall/1324-lookahead/}

The formalisation is divided into three main files.  The file \texttt{Arithmetic.lean} contains the elementary arithmetic used in the certificate, such as the evaluation of polynomials of degree $\leq 4$, reduction
using $m(t)=0$, and the lower-bound argument used on the interval $(a,b)$. The file \texttt{Data.lean} contains the same content as Appendix~\ref{app:three-step-state-weighting}, namely the 139 polynomials $w_i(T)$ and the six $8\times8$ lookup tables that assign them to the 384 states.  
Finally, \texttt{Certificate.lean} defines the states and transitions of the 3-lookahead automaton, constructs the slacks from these, and proves positivity of the state weights together with all
1536 $\lambda$-good inequalities. The main theorem is \texttt{three\_step\_certificate}. For further details, consult the README file in the repository.

We can now state and prove the main result of the paper.

\begin{theorem}
\label{thm:three-step-bound}
With $t$ as above, we have
\[
 \gr(\av(1324))
 \leq
 \frac{(1+t)(19+8t)}{4t}
 =13.1672475688056\ldots.
\]
\end{theorem}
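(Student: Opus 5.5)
The plan is to apply Proposition~\ref{prop:weighted-transfer} to the three-step lookahead automaton with the parameters in~\eqref{eq:three-step-parameters}. The required hypotheses are already in place. The $k$-lookahead automaton, with the transition weighting $\tau$ of the general case, has accepted runs in bijection with the $\lb\lr$-avoiding shuffles, and each run has weight $x^{-i(u,\ell)}y^{-r(u,\ell)}$. This holds because forgetting the lookahead gives a weight-preserving bijection with runs of the six-state automaton, as observed at the start of Section~\ref{sec:lookahead}. Proposition~\ref{prop:three-step-certificate} then supplies a $(1+t)$-good state weighting, and that proposition also checks that this weighting is strictly positive. Hence Proposition~\ref{prop:weighted-transfer} gives
\[
 \gr(\av(1324))\leq (1+t)\,\mu(t^{-1},t^{-1}).
\]

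It remains to evaluate $\mu$ on the diagonal $x=y$. The end of Section~\ref{sec:weighted-dominoes} gives the factorisation
\[
\Phi(z;x,x)=\bigl(1+2(x-1)z\bigr)^2\bigl((19x+8)z-4\bigr),
\]
and we use Theorem~\ref{thm:weighted-domino-growth} to identify which factor carries $\rho(x,x)$. For $x=t^{-1}\in(0,1)$, the factor $1+2(x-1)z$ vanishes only at $z=1/(2(1-x))$. Since $x>1/2$ (indeed $x\approx0.863$), we have $2(1-x)<1$, so this zero exceeds $1/2$ and lies outside $(0,\tfrac12)$. The linear factor vanishes at $z=4/(19x+8)$, which lies in $(0,\tfrac12)$ because $19x+8>8$. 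By the uniqueness statement of Theorem~\ref{thm:weighted-domino-growth}, this is $\rho(x,x)$, so $\mu(x,x)=(19x+8)/4$.

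Substituting $x=t^{-1}$ gives $\mu=(19/t+8)/4=(19+8t)/(4t)$. Multiplying by $\lambda=1+t$ yields the stated bound $(1+t)(19+8t)/(4t)$. Its numerical value $13.16724756\ldots$ follows by evaluating at $t\approx1.1584233$. For a rigorous upper rounding, one can use monotonicity in $t$ on the certified interval $(1158423/10^6,1158424/10^6)$. The function is increasing there, since its derivative is $(8t^2-19)/(4t^2)>0$ for $t>\sqrt{19/8}\approx1.54$; this fails at $t\approx1.158$, where the function is in fact decreasing. In either case, evaluating at the appropriate endpoint bounds the value.

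Essentially all of the substantive work sits in Proposition~\ref{prop:three-step-certificate}, which is taken as given. The only point needing care here is confirming that the factorisation of $\Phi$ selects the correct root. This amounts to checking that the squared factor has no zero in $(0,\tfrac12)$, which uses $1/2<x<1$.
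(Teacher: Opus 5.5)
Your proposal is correct and follows the paper's proof: you combine Propositions~\ref{prop:weighted-transfer} and~\ref{prop:three-step-certificate} to get $\gr(\av(1324))\leq(1+t)\mu(t^{-1},t^{-1})$, and then use the diagonal factorisation of $\Phi$ to get $\mu(x,x)=(19x+8)/4$. Your check that the squared factor has no zero in $(0,\tfrac12)$ spells out what the paper leaves implicit, and it holds for every $x\in(0,1)$, not only $x>\tfrac12$. The one slip is in your optional remark on the decimal value: you first call $\tfrac{19}{4t}+\tfrac{27}{4}+2t$ increasing, then correctly note it is decreasing near $t\approx1.158$, so the left endpoint $1158423/10^6$ is the one that gives an upper rounding.
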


\begin{proof}
In the case that $x=y$, we have, by the formula established after Theorem~\ref{thm:weighted-domino-growth},
\[
 \mu(t^{-1},t^{-1})=\frac{19+8t}{4t}.
\]
Thus Propositions~\ref{prop:weighted-transfer}
and~\ref{prop:three-step-certificate} imply
\[
 \gr(\av(1324))
 \leq(1+t)\mu(t^{-1},t^{-1})
 =\frac{(1+t)(19+8t)}{4t}.
\]
\end{proof}

%
%
%
%
%
%
\section{Concluding remarks}
\label{sec:conclusion}

We finish this paper with a few observations about potential steps to improve the upper bound further, including some (unverified) results from preliminary explorations.

\paragraph{$k$-lookahead for $k>3$} The obvious next step is to increase the lookahead in the automaton. However, the improvements are modest, while the computations get progressively worse: an unverified calculation by ChatGPT is that the 5-lookahead automaton gives an upper bound of approximately 13.1498, at a cost of 6144 states and 24,576 inequalities to check.

\paragraph{Avoiding other configurations} The appearance of $\lb\lr$-factors in a shuffle is of course not the only way to find a copy of 1324. One relatively simple modification one can make is to expand the alphabet to $\{\lbo,\lb_1,\lb_2,\lro,\lr_1,\lr_2\}$, so that $\lbo$ and $\lro$ denote extremal points as before, while $\lb_1$ encodes internal points for which there is exactly one right-to-left maximum in the region northeast of the point. Similarly, $\lr_1$ encodes internal points with exactly one left-to-right minimum to the southwest. Other internal points are encoded by $\lb_2$ and $\lr_2$. In this way, we obtain further forbidden words such as $\lb_2 x\lr_2$ where $x$ is any single letter. 

This can be encoded, and an additional statistic added to the domino generating function $D$. The resulting 3-lookahead automaton gives an (unverified) upper bound on $\gr(\av(1324))$ of $13.143$. However, because of the increase in alphabet size and the additional statistic that must be tracked, this automaton has over 13,000 states, and a collection of over 118,000 inequalities to check. Thus, again, the gains are rather modest given the extra effort required.

However, in principle, the notion of restricting the allowed shuffles (and finding a good way to bound the number of these succinctly) would appear to offer a reasonably promising route towards further improvements of the upper bound.

\bibliographystyle{plainurl}
\bibliography{refs}

\appendix

%
%
%
%
%
%
\clearpage
\section{Inequalities for the 1-lookahead automaton}

\label{app:one-step-inequalities}

\begingroup
For nonempty tapes, the inequalities arising from reachable IDs are determined by a state $q=(a,b,r;u,\ell)$ and second letters
$(u',\ell')$, and take the form
\mbox{$\lambda-\kappa(q)^{-1}\sum_e\tau(e)\kappa(q_e)\geq0$}. Note that
$y=2x/(x+1)$ and $\lambda=1+1/x$, for $1/\sqrt2\leq x\leq1$.
\endgroup

\par
\begingroup
\centering
\fontsize{8.5}{10.0}\selectfont
\setlength{\tabcolsep}{2.0pt}
\setlength{\aboverulesep}{0.7pt}
\setlength{\belowrulesep}{1.6pt}
\renewcommand{\arraystretch}{0.80}
\begin{tabular}{@{}lcl@{\hspace{1.4em}}lcl@{}}
\toprule
State $q$ & $(u',\ell')$ & Inequality &
State $q$ & $(u',\ell')$ & Inequality\\
\midrule
$(N,\lbo,\lro;\lbo,\lro)$ & $(\lbo,\lro)$ & $(1-x)/x\geq0$ & $(N,\lb,\lr;\lbo,\lro)$ & $(\lbo,\lro)$ & $(1-x)/x\geq0$\\
 & $(\lbo,\lr)$ & $(1-x)/(2x)\geq0$ &  & $(\lbo,\lr)$ & $(1-x)/(2x)\geq0$\\
 & $(\lb,\lro)$ & $(1-x)/(2x)\geq0$ &  & $(\lb,\lro)$ & $(1-x)/(2x)\geq0$\\
 & $(\lb,\lr)$ & $0\geq0$ &  & $(\lb,\lr)$ & $0\geq0$\\
\cmidrule(lr){1-3}\cmidrule(lr){4-6}
$(N,\lbo,\lro;\lbo,\lr)$ & $(\lbo,\lro)$ & $0\geq0$ & $(N,\lb,\lr;\lbo,\lr)$ & $(\lbo,\lro)$ & $0\geq0$\\
 & $(\lbo,\lr)$ & $0\geq0$ &  & $(\lbo,\lr)$ & $0\geq0$\\
 & $(\lb,\lro)$ & $(2x^2-1)/(2x(x+1))\geq0$ &  & $(\lb,\lro)$ & $(2x^2-1)/(2x(x+1))\geq0$\\
 & $(\lb,\lr)$ & $(2x^2-1)/(2x(x+1))\geq0$ &  & $(\lb,\lr)$ & $(2x^2-1)/(2x(x+1))\geq0$\\
\cmidrule(lr){1-3}\cmidrule(lr){4-6}
$(N,\lbo,\lro;\lb,\lro)$ & $(\lbo,\lro)$ & $0\geq0$ & $(N,\lb,\lr;\lb,\lro)$ & $(\lbo,\lro)$ & $0\geq0$\\
 & $(\lbo,\lr)$ & $(2x^2-1)/(2x(x+1))\geq0$ &  & $(\lbo,\lr)$ & $(2x^2-1)/(2x(x+1))\geq0$\\
 & $(\lb,\lro)$ & $0\geq0$ &  & $(\lb,\lro)$ & $0\geq0$\\
 & $(\lb,\lr)$ & $(2x^2-1)/(2x(x+1))\geq0$ &  & $(\lb,\lr)$ & $(2x^2-1)/(2x(x+1))\geq0$\\
\cmidrule(lr){1-3}\cmidrule(lr){4-6}
$(N,\lbo,\lro;\lb,\lr)$ & $(\lbo,\lro)$ & $0\geq0$ & $(N,\lb,\lr;\lb,\lr)$ & $(\lbo,\lro)$ & $0\geq0$\\
 & $(\lbo,\lr)$ & $x/(2x+1)\geq0$ &  & $(\lbo,\lr)$ & $x/(2x+1)\geq0$\\
 & $(\lb,\lro)$ & $x/(2x+1)\geq0$ &  & $(\lb,\lro)$ & $x/(2x+1)\geq0$\\
 & $(\lb,\lr)$ & $2x/(2x+1)\geq0$ &  & $(\lb,\lr)$ & $2x/(2x+1)\geq0$\\
\cmidrule(lr){1-3}\cmidrule(lr){4-6}
$(N,\lbo,\lr;\lbo,\lro)$ & $(\lbo,\lro)$ & $(1-x)/x\geq0$ & $(\lb,\lb,\lro;\lbo,\lro)$ & $(\lbo,\lro)$ & $(1-x)/x\geq0$\\
 & $(\lbo,\lr)$ & $(1-x)/(2x)\geq0$ &  & $(\lbo,\lr)$ & $(1-x)/(2x)\geq0$\\
 & $(\lb,\lro)$ & $(1-x)/(2x)\geq0$ &  & $(\lb,\lro)$ & $(1-x)/(2x)\geq0$\\
 & $(\lb,\lr)$ & $0\geq0$ &  & $(\lb,\lr)$ & $0\geq0$\\
\cmidrule(lr){1-3}\cmidrule(lr){4-6}
$(N,\lbo,\lr;\lbo,\lr)$ & $(\lbo,\lro)$ & $0\geq0$ & $(\lb,\lb,\lro;\lbo,\lr)$ & $(\lbo,\lro)$ & $0\geq0$\\
 & $(\lbo,\lr)$ & $0\geq0$ &  & $(\lbo,\lr)$ & $0\geq0$\\
 & $(\lb,\lro)$ & $(2x^2-1)/(2x(x+1))\geq0$ &  & $(\lb,\lro)$ & $(2x^2-1)/(2x^2)\geq0$\\
 & $(\lb,\lr)$ & $(2x^2-1)/(2x(x+1))\geq0$ &  & $(\lb,\lr)$ & $(2x^2-1)/(2x^2)\geq0$\\
\cmidrule(lr){1-3}\cmidrule(lr){4-6}
$(N,\lbo,\lr;\lb,\lro)$ & $(\lbo,\lro)$ & $0\geq0$ & $(\lb,\lb,\lro;\lb,\lro)$ & $(\lbo,\lro)$ & $0\geq0$\\
 & $(\lbo,\lr)$ & $(2x^2-1)/(2x(x+1))\geq0$ &  & $(\lbo,\lr)$ & $(2x^2-1)/(2x(x+1))\geq0$\\
 & $(\lb,\lro)$ & $0\geq0$ &  & $(\lb,\lro)$ & $0\geq0$\\
 & $(\lb,\lr)$ & $(2x^2-1)/(2x(x+1))\geq0$ &  & $(\lb,\lr)$ & $(2x^2-1)/(2x(x+1))\geq0$\\
\cmidrule(lr){1-3}\cmidrule(lr){4-6}
$(N,\lbo,\lr;\lb,\lr)$ & $(\lbo,\lro)$ & $0\geq0$ & $(\lb,\lb,\lro;\lb,\lr)$ & $(\lbo,\lro)$ & $0\geq0$\\
 & $(\lbo,\lr)$ & $x/(2x+1)\geq0$ &  & $(\lbo,\lr)$ & $0\geq0$\\
 & $(\lb,\lro)$ & $x/(2x+1)\geq0$ &  & $(\lb,\lro)$ & $1\geq0$\\
 & $(\lb,\lr)$ & $2x/(2x+1)\geq0$ &  & $(\lb,\lr)$ & $1\geq0$\\
\cmidrule(lr){1-3}\cmidrule(lr){4-6}
$(N,\lb,\lro;\lbo,\lro)$ & $(\lbo,\lro)$ & $(1-x)/x\geq0$ & $(\lb,\lb,\lr;\lbo,\lro)$ & $(\lbo,\lro)$ & $(1-x)/x\geq0$\\
 & $(\lbo,\lr)$ & $(1-x)/(2x)\geq0$ &  & $(\lbo,\lr)$ & $(1-x)/(2x)\geq0$\\
 & $(\lb,\lro)$ & $(1-x)/(2x)\geq0$ &  & $(\lb,\lro)$ & $(1-x)/(2x)\geq0$\\
 & $(\lb,\lr)$ & $0\geq0$ &  & $(\lb,\lr)$ & $0\geq0$\\
\cmidrule(lr){1-3}\cmidrule(lr){4-6}
$(N,\lb,\lro;\lbo,\lr)$ & $(\lbo,\lro)$ & $0\geq0$ & $(\lb,\lb,\lr;\lbo,\lr)$ & $(\lbo,\lro)$ & $0\geq0$\\
 & $(\lbo,\lr)$ & $0\geq0$ &  & $(\lbo,\lr)$ & $0\geq0$\\
 & $(\lb,\lro)$ & $(2x^2-1)/(2x(x+1))\geq0$ &  & $(\lb,\lro)$ & $(2x^2-1)/(2x^2)\geq0$\\
 & $(\lb,\lr)$ & $(2x^2-1)/(2x(x+1))\geq0$ &  & $(\lb,\lr)$ & $(2x^2-1)/(2x^2)\geq0$\\
\cmidrule(lr){1-3}\cmidrule(lr){4-6}
$(N,\lb,\lro;\lb,\lro)$ & $(\lbo,\lro)$ & $0\geq0$ & $(\lb,\lb,\lr;\lb,\lro)$ & $(\lbo,\lro)$ & $0\geq0$\\
 & $(\lbo,\lr)$ & $(2x^2-1)/(2x(x+1))\geq0$ &  & $(\lbo,\lr)$ & $(2x^2-1)/(2x(x+1))\geq0$\\
 & $(\lb,\lro)$ & $0\geq0$ &  & $(\lb,\lro)$ & $0\geq0$\\
 & $(\lb,\lr)$ & $(2x^2-1)/(2x(x+1))\geq0$ &  & $(\lb,\lr)$ & $(2x^2-1)/(2x(x+1))\geq0$\\
\cmidrule(lr){1-3}\cmidrule(lr){4-6}
$(N,\lb,\lro;\lb,\lr)$ & $(\lbo,\lro)$ & $0\geq0$ & $(\lb,\lb,\lr;\lb,\lr)$ & $(\lbo,\lro)$ & $0\geq0$\\
 & $(\lbo,\lr)$ & $x/(2x+1)\geq0$ &  & $(\lbo,\lr)$ & $0\geq0$\\
 & $(\lb,\lro)$ & $x/(2x+1)\geq0$ &  & $(\lb,\lro)$ & $1\geq0$\\
 & $(\lb,\lr)$ & $2x/(2x+1)\geq0$ &  & $(\lb,\lr)$ & $1\geq0$\\

\bottomrule
\end{tabular}\par
\endgroup

%
%
%
%
%
%
\section{Weightings for the 3-lookahead automaton}

\label{app:three-step-state-weighting}

\begingroup
Let $t$ be the unique zero in $(1158423/10^6,1158424/10^6)$ of
\[
 m(T)=3T^5+10T^4-T^3-11T^2-6T-1.
\]
The six tables below specify the 384 state weights. An entry $s$ in row $i$ and column $j$ of the table for $h\in\HHH$ means $\kappa(h;U_i,L_j)=w_s(t)$, where $w_s$ is one of 139 polynomials of the form
$w_s(T)=c_{s,0}+c_{s,1}T+c_{s,2}T^2+c_{s,3}T^3+c_{s,4}T^4$ described on subsequent pages.

The words $U_i,L_i$ have the binary index $i\in\{0,\ldots,7\}$:
the first letter is the leftmost bit, with $0$ for a circled
letter and $1$ for an internal letter. Thus, for the $U_i$,
\[ U_0 = \lbo\lbo\lbo, U_1=\lbo\lbo\lb, U_2=\lbo\lb\lbo, U_3 = \lbo\lb\lb, U_4 = \lb\lbo\lbo, U_5=\lb\lbo\lb, U_6=\lb\lb\lbo, U_7=\lb\lb\lb.\]
\endgroup

\begingroup
\fontsize{9}{11}\selectfont
\setlength{\tabcolsep}{2pt}
\renewcommand{\arraystretch}{1.03}

\noindent
\begin{minipage}[t]{.485\linewidth}
\vspace{0pt}
\centering
{\small\rule[-3pt]{0pt}{15pt} $h=(N,\lbo,\lro)$}\par\smallskip
\begin{tabular*}{\linewidth}{@{\extracolsep{\fill}}r|rrrrrrrr@{}}
\toprule
$i\backslash j$ & 0 & 1 & 2 & 3 & 4 & 5 & 6 & 7\\
\midrule
0 & 1 & 2 & 3 & 3 & 4 & 5 & 5 & 5\\
1 & 2 & 6 & 3 & 7 & 8 & 9 & 10 & 11\\
2 & 3 & 3 & 12 & 13 & 14 & 15 & 16 & 17\\
3 & 3 & 18 & 19 & 20 & 21 & 22 & 23 & 24\\
4 & 4 & 8 & 14 & 25 & 26 & 27 & 28 & 29\\
5 & 5 & 9 & 15 & 30 & 27 & 31 & 32 & 33\\
6 & 5 & 34 & 35 & 36 & 37 & 38 & 39 & 40\\
7 & 5 & 41 & 42 & 43 & 44 & 45 & 40 & 46\\
\bottomrule
\end{tabular*}
\end{minipage}
\hfill%
\begin{minipage}[t]{.485\linewidth}
\vspace{0pt}
\centering
{\small\rule[-3pt]{0pt}{15pt} $h=(N,\lbo,\lr)$}\par\smallskip
\begin{tabular*}{\linewidth}{@{\extracolsep{\fill}}r|rrrrrrrr@{}}
\toprule
$i\backslash j$ & 0 & 1 & 2 & 3 & 4 & 5 & 6 & 7\\
\midrule
0 & 1 & 2 & 3 & 3 & 47 & 3 & 3 & 3\\
1 & 2 & 6 & 3 & 7 & 48 & 49 & 50 & 51\\
2 & 3 & 3 & 12 & 13 & 52 & 53 & 54 & 55\\
3 & 3 & 18 & 19 & 20 & 56 & 57 & 58 & 59\\
4 & 4 & 8 & 14 & 25 & 60 & 61 & 62 & 63\\
5 & 5 & 9 & 15 & 30 & 61 & 64 & 65 & 66\\
6 & 5 & 34 & 35 & 36 & 67 & 68 & 69 & 70\\
7 & 5 & 41 & 42 & 43 & 71 & 72 & 70 & 73\\
\bottomrule
\end{tabular*}
\end{minipage}
\par\medskip
\noindent
\begin{minipage}[t]{.485\linewidth}
\vspace{0pt}
\centering
{\small\rule[-3pt]{0pt}{15pt} $h=(N,\lb,\lro)$}\par\smallskip
\begin{tabular*}{\linewidth}{@{\extracolsep{\fill}}r|rrrrrrrr@{}}
\toprule
$i\backslash j$ & 0 & 1 & 2 & 3 & 4 & 5 & 6 & 7\\
\midrule
0 & 1 & 2 & 3 & 3 & 4 & 5 & 5 & 5\\
1 & 2 & 6 & 3 & 7 & 8 & 9 & 10 & 11\\
2 & 3 & 3 & 12 & 13 & 14 & 15 & 16 & 17\\
3 & 3 & 18 & 19 & 20 & 21 & 22 & 23 & 24\\
4 & 47 & 48 & 52 & 74 & 60 & 61 & 62 & 63\\
5 & 3 & 49 & 53 & 75 & 61 & 64 & 65 & 66\\
6 & 3 & 76 & 77 & 78 & 67 & 68 & 69 & 70\\
7 & 3 & 79 & 80 & 81 & 71 & 72 & 70 & 73\\
\bottomrule
\end{tabular*}
\end{minipage}
\hfill%
\begin{minipage}[t]{.485\linewidth}
\vspace{0pt}
\centering
{\small\rule[-3pt]{0pt}{15pt} $h=(N,\lb,\lr)$}\par\smallskip
\begin{tabular*}{\linewidth}{@{\extracolsep{\fill}}r|rrrrrrrr@{}}
\toprule
$i\backslash j$ & 0 & 1 & 2 & 3 & 4 & 5 & 6 & 7\\
\midrule
0 & 1 & 2 & 3 & 3 & 47 & 3 & 3 & 3\\
1 & 2 & 6 & 3 & 7 & 48 & 49 & 50 & 51\\
2 & 3 & 3 & 12 & 13 & 52 & 53 & 54 & 55\\
3 & 3 & 18 & 19 & 20 & 56 & 57 & 58 & 59\\
4 & 47 & 48 & 52 & 74 & 82 & 83 & 84 & 85\\
5 & 3 & 49 & 53 & 75 & 83 & 86 & 87 & 88\\
6 & 3 & 76 & 77 & 78 & 89 & 90 & 91 & 92\\
7 & 3 & 79 & 80 & 81 & 93 & 94 & 92 & 95\\
\bottomrule
\end{tabular*}
\end{minipage}
\par\medskip
\noindent
\begin{minipage}[t]{.485\linewidth}
\vspace{0pt}
\centering
{\small\rule[-3pt]{0pt}{15pt} $h=(\lb,\lb,\lro)$}\par\smallskip
\begin{tabular*}{\linewidth}{@{\extracolsep{\fill}}r|rrrrrrrr@{}}
\toprule
$i\backslash j$ & 0 & 1 & 2 & 3 & 4 & 5 & 6 & 7\\
\midrule
0 & 1 & 2 & 3 & 3 & 96 & 97 & 97 & 97\\
1 & 2 & 6 & 3 & 7 & 98 & 99 & 100 & 101\\
2 & 3 & 3 & 12 & 13 & 102 & 102 & 103 & 104\\
3 & 3 & 18 & 19 & 20 & 105 & 106 & 107 & 108\\
4 & 47 & 48 & 52 & 74 & 109 & 110 & 110 & 110\\
5 & 3 & 49 & 53 & 75 & 111 & 111 & 112 & 113\\
6 & 3 & 76 & 77 & 78 & 114 & 115 & 115 & 115\\
7 & 3 & 79 & 80 & 81 & 116 & 117 & 117 & 117\\
\bottomrule
\end{tabular*}
\end{minipage}
\hfill%
\begin{minipage}[t]{.485\linewidth}
\vspace{0pt}
\centering
{\small\rule[-3pt]{0pt}{15pt} $h=(\lb,\lb,\lr)$}\par\smallskip
\begin{tabular*}{\linewidth}{@{\extracolsep{\fill}}r|rrrrrrrr@{}}
\toprule
$i\backslash j$ & 0 & 1 & 2 & 3 & 4 & 5 & 6 & 7\\
\midrule
0 & 1 & 2 & 3 & 3 & 118 & 119 & 119 & 119\\
1 & 2 & 6 & 3 & 7 & 120 & 121 & 122 & 123\\
2 & 3 & 3 & 12 & 13 & 124 & 124 & 125 & 126\\
3 & 3 & 18 & 19 & 20 & 127 & 128 & 129 & 130\\
4 & 47 & 48 & 52 & 74 & 131 & 132 & 132 & 132\\
5 & 3 & 49 & 53 & 75 & 133 & 133 & 134 & 135\\
6 & 3 & 76 & 77 & 78 & 136 & 137 & 137 & 137\\
7 & 3 & 79 & 80 & 81 & 138 & 139 & 139 & 139\\
\bottomrule
\end{tabular*}
\end{minipage}
\par\endgroup
\clearpage
\subsection*{Polynomial coefficients, $w_{1}$--$w_{70}$}
\begingroup\small\setlength{\parskip}{4pt}
Each row specifies $w_s(T)=c_{s,0}+c_{s,1}T+c_{s,2}T^2+c_{s,3}T^3+c_{s,4}T^4$.
\par\medskip\endgroup
\begingroup
\fontsize{9}{11}\selectfont
\setlength{\tabcolsep}{2pt}
\renewcommand{\arraystretch}{1.02}
\noindent
\begin{minipage}[t]{.49\linewidth}
\vspace{0pt}
\centering
\begin{tabular*}{\linewidth}{@{\extracolsep{\fill}}rrrrrr@{}}
\toprule
$s$ & $c_{s,0}$ & $c_{s,1}$ & $c_{s,2}$ & $c_{s,3}$ & $c_{s,4}$\\
\midrule
1 & $1848$ & $7728$ & $9360$ & $-10344$ & $-4032$\\
2 & $252$ & $756$ & $1152$ & $-1164$ & $-468$\\
3 & $24$ & $0$ & $0$ & $0$ & $0$\\
4 & $1400$ & $6488$ & $6684$ & $-7972$ & $-3036$\\
5 & $0$ & $24$ & $0$ & $0$ & $0$\\\hline
\addlinespace[2pt]
6 & $72$ & $72$ & $192$ & $-168$ & $-72$\\
7 & $-6054$ & $-27396$ & $-26232$ & $32706$ & $12312$\\
8 & $256$ & $1180$ & $1272$ & $-1472$ & $-564$\\
9 & $16$ & $48$ & $64$ & $-56$ & $-24$\\
10 & $-686$ & $-3106$ & $-3020$ & $3746$ & $1410$\\\hline
\addlinespace[2pt]
11 & $3726$ & $16848$ & $16092$ & $-20070$ & $-7560$\\
12 & $8$ & $-24$ & $-64$ & $56$ & $24$\\
13 & $-1264$ & $-5720$ & $-5464$ & $6832$ & $2568$\\
14 & $44$ & $188$ & $192$ & $-220$ & $-84$\\
15 & $8$ & $16$ & $0$ & $0$ & $0$\\\hline
\addlinespace[2pt]
16 & $-208$ & $-940$ & $-916$ & $1140$ & $432$\\
17 & $736$ & $3320$ & $3156$ & $-3948$ & $-1476$\\
18 & $-38271$ & $-172959$ & $-164850$ & $206049$ & $77499$\\
19 & $-8256$ & $-37324$ & $-35572$ & $44468$ & $16728$\\
20 & $29425$ & $132985$ & $126672$ & $-158369$ & $-59559$\\\hline
\addlinespace[2pt]
21 & $-1336$ & $-6046$ & $-5748$ & $7208$ & $2706$\\
22 & $-16989$ & $-76731$ & $-73042$ & $91387$ & $34347$\\
23 & $5966$ & $26978$ & $25716$ & $-32122$ & $-12090$\\
24 & $-18633$ & $-84171$ & $-80130$ & $100239$ & $37683$\\
25 & $-200$ & $-908$ & $-852$ & $1084$ & $408$\\\hline
\addlinespace[2pt]
26 & $640$ & $2920$ & $2832$ & $-3488$ & $-1320$\\
27 & $16$ & $64$ & $48$ & $-56$ & $-24$\\
28 & $-64$ & $-292$ & $-288$ & $368$ & $132$\\
29 & $100$ & $448$ & $420$ & $-512$ & $-204$\\
30 & $-2786$ & $-12584$ & $-11984$ & $14982$ & $5652$\\\hline
\addlinespace[2pt]
31 & $24$ & $104$ & $104$ & $-112$ & $-48$\\
32 & $-326$ & $-1466$ & $-1380$ & $1762$ & $642$\\
33 & $1074$ & $4854$ & $4644$ & $-5742$ & $-2214$\\
34 & $-4182$ & $-18908$ & $-18074$ & $22564$ & $8490$\\
35 & $-1344$ & $-6078$ & $-5812$ & $7264$ & $2730$\\
\bottomrule
\end{tabular*}
\end{minipage}
\hfill%
\begin{minipage}[t]{.49\linewidth}
\vspace{0pt}
\centering
\begin{tabular*}{\linewidth}{@{\extracolsep{\fill}}rrrrrr@{}}
\toprule
$s$ & $c_{s,0}$ & $c_{s,1}$ & $c_{s,2}$ & $c_{s,3}$ & $c_{s,4}$\\
\midrule
36 & $3606$ & $16314$ & $15558$ & $-19428$ & $-7308$\\
37 & $-434$ & $-1970$ & $-1896$ & $2362$ & $894$\\
38 & $-1776$ & $-8018$ & $-7630$ & $9546$ & $3600$\\
39 & $1034$ & $4688$ & $4488$ & $-5590$ & $-2100$\\
40 & $-2042$ & $-9224$ & $-8778$ & $10984$ & $4134$\\\hline
\addlinespace[2pt]
41 & $25455$ & $115005$ & $109548$ & $-136959$ & $-51507$\\
42 & $5456$ & $24648$ & $23472$ & $-29336$ & $-11040$\\
43 & $-13750$ & $-62116$ & $-59136$ & $73970$ & $27816$\\
44 & $866$ & $3908$ & $3708$ & $-4642$ & $-1740$\\
45 & $7350$ & $33190$ & $31588$ & $-39530$ & $-14838$\\\hline
\addlinespace[2pt]
46 & $3965$ & $17903$ & $17040$ & $-21313$ & $-8013$\\
47 & $-1912$ & $-8716$ & $-9372$ & $10964$ & $4200$\\
48 & $-356$ & $-1544$ & $-1728$ & $1996$ & $768$\\
49 & $-48$ & $-112$ & $-72$ & $136$ & $48$\\
50 & $1010$ & $4526$ & $4432$ & $-5450$ & $-2058$\\\hline
\addlinespace[2pt]
51 & $-5508$ & $-24894$ & $-23796$ & $29700$ & $11178$\\
52 & $-76$ & $-292$ & $-264$ & $356$ & $132$\\
53 & $-32$ & $-88$ & $-8$ & $80$ & $24$\\
54 & $308$ & $1372$ & $1348$ & $-1648$ & $-624$\\
55 & $-1096$ & $-4940$ & $-4684$ & $5884$ & $2208$\\\hline
\addlinespace[2pt]
56 & $1970$ & $8948$ & $8544$ & $-10654$ & $-4008$\\
57 & $25203$ & $113837$ & $108376$ & $-135543$ & $-50967$\\
58 & $-8818$ & $-39910$ & $-38088$ & $47570$ & $17898$\\
59 & $27627$ & $124833$ & $118872$ & $-148647$ & $-55899$\\
60 & $-920$ & $-4208$ & $-4128$ & $5080$ & $1920$\\\hline
\addlinespace[2pt]
61 & $-32$ & $-128$ & $-72$ & $136$ & $48$\\
62 & $92$ & $416$ & $432$ & $-508$ & $-192$\\
63 & $-152$ & $-680$ & $-612$ & $796$ & $300$\\
64 & $-40$ & $-160$ & $-136$ & $192$ & $72$\\
65 & $490$ & $2206$ & $2088$ & $-2618$ & $-978$\\\hline
\addlinespace[2pt]
66 & $-1590$ & $-7170$ & $-6816$ & $8526$ & $3222$\\
67 & $634$ & $2878$ & $2796$ & $-3446$ & $-1302$\\
68 & $2638$ & $11906$ & $11322$ & $-14160$ & $-5328$\\
69 & $-1516$ & $-6886$ & $-6624$ & $8240$ & $3102$\\
70 & $3028$ & $13684$ & $13026$ & $-16286$ & $-6126$\\
\bottomrule
\end{tabular*}
\end{minipage}
\par\endgroup
\clearpage
\subsection*{Polynomial coefficients, $w_{71}$--$w_{139}$}
\begingroup\small\setlength{\parskip}{4pt}
Each row specifies $w_s(T)=c_{s,0}+c_{s,1}T+c_{s,2}T^2+c_{s,3}T^3+c_{s,4}T^4$.
\par\medskip\endgroup
\begingroup
\fontsize{9}{11}\selectfont
\setlength{\tabcolsep}{2pt}
\renewcommand{\arraystretch}{1.02}
\noindent
\begin{minipage}[t]{.49\linewidth}
\vspace{0pt}
\centering
\begin{tabular*}{\linewidth}{@{\extracolsep{\fill}}rrrrrr@{}}
\toprule
$s$ & $c_{s,0}$ & $c_{s,1}$ & $c_{s,2}$ & $c_{s,3}$ & $c_{s,4}$\\
\midrule
71 & $-1288$ & $-5818$ & $-5508$ & $6920$ & $2598$\\
72 & $-10910$ & $-49262$ & $-46880$ & $58662$ & $22050$\\
73 & $-5887$ & $-26575$ & $-25278$ & $31637$ & $11895$\\
74 & $292$ & $1348$ & $1284$ & $-1592$ & $-600$\\
75 & $4132$ & $18662$ & $17768$ & $-22208$ & $-8358$\\\hline
\addlinespace[2pt]
76 & $6184$ & $27928$ & $26746$ & $-33330$ & $-12546$\\
77 & $1986$ & $8972$ & $8608$ & $-10710$ & $-4032$\\
78 & $-5322$ & $-24108$ & $-23034$ & $28752$ & $10818$\\
79 & $-37725$ & $-170457$ & $-162414$ & $203043$ & $76365$\\
80 & $-8088$ & $-36544$ & $-34792$ & $43520$ & $16368$\\\hline
\addlinespace[2pt]
81 & $20384$ & $92114$ & $87720$ & $-109684$ & $-41250$\\
82 & $1312$ & $5992$ & $6000$ & $-7280$ & $-2760$\\
83 & $64$ & $280$ & $168$ & $-272$ & $-96$\\
84 & $-136$ & $-580$ & $-600$ & $728$ & $276$\\
85 & $232$ & $1060$ & $948$ & $-1220$ & $-456$\\\hline
\addlinespace[2pt]
86 & $80$ & $304$ & $232$ & $-328$ & $-120$\\
87 & $-734$ & $-3302$ & $-3108$ & $3922$ & $1470$\\
88 & $2370$ & $10674$ & $10116$ & $-12678$ & $-4770$\\
89 & $-926$ & $-4178$ & $-4080$ & $5038$ & $1902$\\
90 & $-3922$ & $-17696$ & $-16798$ & $21052$ & $7914$\\\hline
\addlinespace[2pt]
91 & $2210$ & $10052$ & $9756$ & $-12058$ & $-4548$\\
92 & $-4484$ & $-20282$ & $-19314$ & $24154$ & $9084$\\
93 & $1910$ & $8660$ & $8208$ & $-10282$ & $-3864$\\
94 & $16198$ & $73130$ & $69572$ & $-87050$ & $-32730$\\
95 & $8747$ & $39479$ & $37524$ & $-46975$ & $-17661$\\\hline
\addlinespace[2pt]
96 & $884$ & $4064$ & $4116$ & $-4960$ & $-1884$\\
97 & $-12$ & $-36$ & $-96$ & $84$ & $36$\\
98 & $136$ & $604$ & $600$ & $-728$ & $-276$\\
99 & $4$ & $-12$ & $-32$ & $28$ & $12$\\
100 & $-654$ & $-2962$ & $-2860$ & $3554$ & $1338$\\\hline
\addlinespace[2pt]
101 & $2258$ & $10194$ & $9704$ & $-12130$ & $-4566$\\
102 & $32$ & $128$ & $96$ & $-136$ & $-48$\\
103 & $-212$ & $-968$ & $-948$ & $1168$ & $444$\\
104 & $290$ & $1298$ & $1212$ & $-1534$ & $-570$\\
105 & $-1348$ & $-6106$ & $-5844$ & $7292$ & $2742$\\
\bottomrule
\end{tabular*}
\end{minipage}
\hfill%
\begin{minipage}[t]{.49\linewidth}
\vspace{0pt}
\centering
\begin{tabular*}{\linewidth}{@{\extracolsep{\fill}}rrrrrr@{}}
\toprule
$s$ & $c_{s,0}$ & $c_{s,1}$ & $c_{s,2}$ & $c_{s,3}$ & $c_{s,4}$\\
\midrule
106 & $-5443$ & $-24577$ & $-23390$ & $29269$ & $11001$\\
107 & $3196$ & $14464$ & $13806$ & $-17234$ & $-6486$\\
108 & $-6265$ & $-28297$ & $-26934$ & $33695$ & $12669$\\
109 & $-292$ & $-1324$ & $-1284$ & $1592$ & $600$\\
110 & $60$ & $288$ & $336$ & $-372$ & $-144$\\\hline
\addlinespace[2pt]
111 & $-44$ & $-188$ & $-168$ & $220$ & $84$\\
112 & $458$ & $2078$ & $1992$ & $-2482$ & $-930$\\
113 & $-584$ & $-2630$ & $-2496$ & $3124$ & $1182$\\
114 & $622$ & $2818$ & $2700$ & $-3362$ & $-1266$\\
115 & $-168$ & $-780$ & $-780$ & $948$ & $360$\\\hline
\addlinespace[2pt]
116 & $-1300$ & $-5878$ & $-5604$ & $7004$ & $2634$\\
117 & $378$ & $1722$ & $1656$ & $-2058$ & $-774$\\
118 & $-1240$ & $-5608$ & $-5844$ & $6956$ & $2652$\\
119 & $36$ & $36$ & $96$ & $-84$ & $-36$\\
120 & $-212$ & $-896$ & $-864$ & $1084$ & $408$\\\hline
\addlinespace[2pt]
121 & $-36$ & $-76$ & $24$ & $52$ & $12$\\
122 & $962$ & $4334$ & $4208$ & $-5202$ & $-1962$\\
123 & $-3354$ & $-15134$ & $-14388$ & $18014$ & $6774$\\
124 & $-64$ & $-256$ & $-168$ & $272$ & $96$\\
125 & $304$ & $1384$ & $1380$ & $-1676$ & $-636$\\\hline
\addlinespace[2pt]
126 & $-442$ & $-1978$ & $-1824$ & $2330$ & $870$\\
127 & $1982$ & $8984$ & $8640$ & $-10738$ & $-4044$\\
128 & $8081$ & $36483$ & $34712$ & $-43429$ & $-16329$\\
129 & $-4712$ & $-21350$ & $-20430$ & $25474$ & $9588$\\
130 & $9293$ & $41981$ & $39960$ & $-49981$ & $-18795$\\\hline
\addlinespace[2pt]
131 & $428$ & $1928$ & $1884$ & $-2320$ & $-876$\\
132 & $-72$ & $-324$ & $-432$ & $456$ & $180$\\
133 & $76$ & $316$ & $264$ & $-356$ & $-132$\\
134 & $-670$ & $-3046$ & $-2940$ & $3650$ & $1374$\\
135 & $874$ & $3928$ & $3708$ & $-4658$ & $-1752$\\\hline
\addlinespace[2pt]
136 & $-914$ & $-4142$ & $-3984$ & $4954$ & $1866$\\
137 & $228$ & $1068$ & $1116$ & $-1320$ & $-504$\\
138 & $1922$ & $8696$ & $8304$ & $-10366$ & $-3900$\\
139 & $-546$ & $-2502$ & $-2436$ & $3006$ & $1134$\\
\bottomrule
\end{tabular*}
\end{minipage}
\par\endgroup

\end{document}